\numberwithin{equation}{section}
\theoremstyle{definition}
\newtheorem{theorem}{Theorem}[section]
\newtheorem{lemma}[theorem]{Lemma}
\newtheorem{corollary}{Corollary}[section]
\numberwithin{equation}{section}
\begin{document}

\title[]{Representing Carlitz identity with $q$-Shift Operator}
\author{Yang Dunkun}
\address{Department of Mathematics,   East china Normal University,   500 Dongchuan Roda,   Shanghai 200241,   P. R. China}
\email{Yangdunkun@foxmail.com}
\date{}
\subjclass{05A10,11A07,33D15}
\keywords{$q$-transformation, $q$-congruences, Carlitz identity}

\begin{abstract}
  This paper presents a new identity for the $q$-shift operator, building on techniques developed by Liu and Sears. The identity encompasses the Carlitz identity as a special case and utilizes it to extend the Carlitz-type identities proposed by Wang. As applications, we derive an equivalent form of the generalized Carlitz identity to prove two $q$-congruences on cyclotomic polynomials, expanding upon the results of Guo and Wang.
\end{abstract}

\maketitle

\section{introduction}

In this paper, we adhere to the notations introduced in \cite{gasper-book} and assume that $0 < |q| < 1$. The $q$-series and its compact factorials are defined as follows,
\begin{align*}
  (a;q)_0=1, \qquad (a;q)_n=\prod_{k=0}^{n-1}(1-aq^k), \qquad
  (a;q)_{\infty}=\prod_{k=0}^{\infty}(1-aq^k),
\end{align*}
and $(a_1,a_2,\cdots,a_m;q)_n=(a_1;q)_n(a_2;q)_n \cdots (a_m;q)_n$, where $n$ is an integer or $\infty$. The basic hypergeometric series $_{r}\phi_s$ is defined as 
\begin{align}
  _r\phi_s\left(\begin{gathered}
    a_1,a_2,\dots,a_r\\
    b_1,b_2,\dots,b_s
   \end{gathered};q,z\right)=\sum_{n=0}^{\infty}\frac{(a_1,a_2,\dots,a_r;q)_n}{(b_1,b_2,\dots,b_r;q)_n}\left[(-1)^nq^{\binom{n}{2}}\right]^{1\!+\!s\!-\!r}z^n.\notag
\end{align}

The main results of this article consist of two parts. Firstly, we have obtained three transformation identities involving the $q$-shift operator. These identities include the Carlitz identity \cite{carlitz1974} and the Carlitz identity extended by Wang \cite{wmj2008, wmj2013} as special cases, resulting in three more general Carlitz identities. 

Secondly, drawing inspiration from Guo's proof in \cite{gjw2019}, we present an equivalent form for the generalized Carlitz identity discussed in the first part. Utilizing this equivalent form, we establish several $q$-congruences, two of which correspond to Wang's results in \cite{wxx-yml-2023}.

\subsection{$q$-transformation identities of Carlitz}
\

\

In 1974, Carlitz \cite{carlitz1974} obtained the following identity for the three
variables $a$,  $b$, and $q$:
\begin{theorem}\label{carlitz1974-them}
  \begin{equation}\label{Carlitz1974}
    \sum_{k=0}^n \frac{(a,  b ;q)_k}{(q;q)_k}(-a b)^{n-k} q^{\binom{n}{2}\!-\!\binom{k}{2}}
    \!=\!(a;q)_{n\!+\!1} \sum_{k=0}^n \frac{(\!-\!b)^k q^{\binom{k}{2}}}{(q;q)_k(q;q)_{n-k}\left(1\!-\!a q^{n-k}\right)}.
  \end{equation}
\end{theorem}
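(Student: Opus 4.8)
The plan is to treat \eqref{Carlitz1974} as an identity between two sequences indexed by $n$ (with $a,b,q$ fixed), and to show that both sequences are the unique solution of the same first-order inhomogeneous recurrence with the same initial value.

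Write $L_{n}$ and $R_{n}$ for the left- and right-hand sides of \eqref{Carlitz1974}. For $L_{n}$ the recurrence is immediate: isolating the term $k=n+1$ and using $q^{\binom{n+1}{2}}=q^{\binom{n}{2}+n}$ in the remaining terms gives
\begin{equation*}
  L_{n+1}=\frac{(a,b;q)_{n+1}}{(q;q)_{n+1}}-ab\,q^{n}L_{n},\qquad L_{0}=1 .
\end{equation*}
Hence it suffices to show $R_{0}=1$ and that $R_{n}$ obeys the same recurrence. The initial value is clear, since at $n=0$ only the $k=0$ term of $R_{0}$ survives and $(a;q)_{1}/(1-a)=1$.

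To verify the recurrence for $R_{n}$, I would first put it in hypergeometric form. Reversing the index $k\mapsto n-k$ and using the factorizations $(a;q)_{n+1}=(a;q)_{k}(1-aq^{k})(aq^{k+1};q)_{n-k}$ and $(aq^{k+1};q)_{n-k}=(aq;q)_{n}/(aq;q)_{k}$, together with the reflection identities $(q;q)_{n-k}=(-1)^{k}q^{\binom{k}{2}-nk}(q;q)_{n}/(q^{-n};q)_{k}$ and $q^{\binom{n-k}{2}}=q^{\binom{n}{2}+\binom{k}{2}+k-nk}$, a direct computation reduces $R_{n}$ to a single terminating series,
\begin{equation*}
  R_{n}=\frac{(-b)^{n}q^{\binom{n}{2}}(aq;q)_{n}}{(q;q)_{n}}\;
  {}_{2}\phi_{1}\!\left(\begin{gathered}q^{-n},\,a\\ aq\end{gathered};q,q/b\right).
\end{equation*}
Substituting this into $R_{n+1}+ab\,q^{n}R_{n}$, factoring out $(-b)^{n}q^{\binom{n}{2}}(aq;q)_{n}/(q;q)_{n}$, and simplifying the shifted factorials, the desired recurrence for $R_{n}$ becomes equivalent to the contiguous relation
\begin{multline*}
  (1-aq^{n+1})\,{}_{2}\phi_{1}\!\left(\begin{gathered}q^{-n-1},\,a\\ aq\end{gathered};q,q/b\right)\\
  {}-a(1-q^{n+1})\,{}_{2}\phi_{1}\!\left(\begin{gathered}q^{-n},\,a\\ aq\end{gathered};q,q/b\right)
  =\frac{(1-a)(b;q)_{n+1}}{(-b)^{n+1}q^{\binom{n+1}{2}}},
\end{multline*}
which relates two terminating ${}_{2}\phi_{1}$'s differing only in the truncating parameter $q^{-n}$.

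This contiguous relation is the crux of the proof, and it is where the real work lies. I would establish it by applying a Heine-type transformation to each ${}_{2}\phi_{1}$ and then invoking the $q$-Chu--Vandermonde (equivalently $q$-Gauss) summation, or --- closer to the spirit of this paper --- by realizing it as a specialization of Sears' transformation for terminating ${}_{3}\phi_{2}$ series; it can also be proved directly by induction on $n$ from the classical three-term contiguous relations for the ${}_{2}\phi_{1}$. The obstruction in every route is the same: the factor $1/(1-aq^{n-k})$ in the summand of $R_{n}$ does not interact cleanly with the shift $n\mapsto n+1$, so some hypergeometric normalization---or an equivalent device, for instance expanding $1/(1-aq^{n-k})$ as a geometric series and resumming the $k$-sum by the finite $q$-binomial theorem $\sum_{k=0}^{n}\frac{(q;q)_{n}}{(q;q)_{k}(q;q)_{n-k}}(-b)^{k}q^{\binom{k}{2}}=(b;q)_{n}$---is needed before the recurrence becomes transparent. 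Once the ${}_{2}\phi_{1}$ form of $R_{n}$ and the contiguous relation are in hand the induction closes and \eqref{Carlitz1974} follows; as consistency checks one verifies both sides at $n=0,1$ and at $a=0$, where \eqref{Carlitz1974} degenerates exactly to this same finite $q$-binomial theorem.
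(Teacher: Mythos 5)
Your reduction is sound as far as it goes: the recurrence $L_{n+1}=\frac{(a,b;q)_{n+1}}{(q;q)_{n+1}}-abq^{n}L_{n}$ is correct, the initial values match, the rewriting of $R_{n}$ as $\frac{(-b)^{n}q^{\binom{n}{2}}(aq;q)_{n}}{(q;q)_{n}}\,{}_{2}\phi_{1}(q^{-n},a;aq;q,q/b)$ checks out, and the contiguous relation you arrive at is true (at $n=0$ both sides reduce to $(1-a)(1-b)/(-b)$). The problem is that this contiguous relation is not an auxiliary lemma you then prove --- it is the entire content of the identity, repackaged. Everything you actually carry out (splitting off the top term of $L_{n+1}$, index reversal, reflection of $(q;q)_{n-k}$) is elementary bookkeeping; the step where the two sides of \eqref{Carlitz1974} are genuinely tied together is exactly the displayed relation between ${}_{2}\phi_{1}(q^{-n-1},a;aq;q,q/b)$ and ${}_{2}\phi_{1}(q^{-n},a;aq;q,q/b)$, and there you only name three candidate methods without executing any of them. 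At least the first is not routine: $q$-Chu--Vandermonde requires the argument to be $q$ or $cq^{n}/b$, whereas here it is the generic $q/b$, so neither series sums in closed form and some genuine transformation work (or a term-by-term telescoping that must handle the $k=n+1$ boundary, where the ratio $(q^{-n-1};q)_{k}/(q^{-n};q)_{k}$ degenerates) is unavoidable. As written, the proposal is a correct reformulation of the theorem rather than a proof of it.

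For comparison, the paper also runs a first-order recurrence in $n$, but on the normalized left-hand side $\phi_{n}(a,b)=\frac{(q;q)_{n}}{(a;q)_{n+1}}L_{n}$ and with an \emph{operator} coefficient: $\phi_{n}(a,b)=(\eta_{a}-bq^{n-1})\phi_{n-1}(a,b)$, where $\eta_{a}$ shifts $a\mapsto aq$. Because the recurrence moves $a$ instead of holding it fixed, iterating it and expanding $(\eta_{a}-bq^{n-1})\cdots(\eta_{a}-b)$ by the finite $q$-binomial theorem lands directly on $\sum_{k}\binom{n}{k}_{q}(-b)^{k}q^{\binom{k}{2}}/(1-aq^{n-k})$; the only inputs are the elementary partial-fraction splitting of $(q;q)_{n}/(q;q)_{k}$ and $\phi_{0}=1/(1-a)$, and no contiguous relation is needed. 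If you want to complete your own route, the most economical fix is the device you mention only in passing: expand $1/(1-aq^{n-k})$ as a geometric series in $j$ and resum over $k$ by the finite $q$-binomial theorem to get $R_{n}=\frac{(a;q)_{n+1}}{(q;q)_{n}}\sum_{j\ge 0}a^{j}q^{nj}(bq^{-j};q)_{n}$, from which your recurrence for $R_{n}$ can be verified by a direct (but still nontrivial) computation --- that computation is precisely what is missing.
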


In 2008 and 2013, Wang \cite{wmj2008, wmj2013} extended the preceding identity by employing advanced mathematical techniques, specifically the Andrews-Askey integral and the $q$-Chu Vandermonde identity. This expansion is presented as follows:
\begin{align}\label{wang-eq1}
  \sum_{k=0}^n \frac{(a,b,bx ; q)_k}{(q ; q)_k}(-ab)^{n-k}q^{\binom{n}{2}\!-\!\binom{k}{2}}\varphi_{n-k}^{(bq^k)}(x|q)\!=\! \frac{(a;q)_{n\!+\!1}}{(q;q)_n} \sum_{k=0}^n\begin{bmatrix}    n \\k
  \end{bmatrix}_q\frac{(\!-\!b)^k q^{\binom{k}{2}}}{1\!-\!aq^{n-k}}\varphi_k^{(b)}(x|q),
\end{align}
and
\begin{align}\label{wang-eq2}
  \sum_{k=0}^n\frac{(a,\frac{1}{u};q)_k}{(q;q)_k}(\!-\!\frac{a}{v})^{n\!-\!k} & q^{\binom{n}{2}\!-\!\binom{k}{2}}\sum_{i=0}^{k}\frac{(q^{\!-\!k},us,ut;q)_iq^i}{(q,uq^{\!-\!k\!+\!1},uvst;q)_i} \  _3\phi_2\left(\begin{gathered}
vs,vt,q^{k\!-\!n}\\
 0,uvstq^i
\end{gathered};q,q\right)\notag \\
& =\frac{(a;q)_{n+1}}{(q;q)_n} \sum_{k=0}^n\begin{bmatrix}
 n \\k
\end{bmatrix}_q\frac{(-\frac{1}{u})^kq^{\binom{k}{2}}}{1-aq^{n\!-\!k}}\  _3\phi_2\left(\begin{gathered}
 vs,vt,q^{\!-\!k}\\
 0,uvst
\end{gathered};q,q\right).
\end{align}
where
\begin{align*}
  \varphi_n^{(b)}(x|q)=\sum_{k=0}^n\begin{bmatrix} n\\k \end{bmatrix}_q(b;q)_k x^k, \qquad
  \begin{bmatrix} n \\k \end{bmatrix}_q
  =\frac{ (q;q)_n }{ (q;q)_k (q;q)_{n-k} },
\end{align*}
are referred to as Hahn polynomials \cite{Hahn1949,Liu2015} and $q$-binomial coefficient, respectively.

In Gasper's book \cite[Ex 1.17]{gasper-book}, the Carlitz identity is
presented to the reader as an exercise, instantly capturing the author's attention
due to its unique structural attributes. Upon encountering this identity, the author
is intellectually stimulated to contemplate the approach for its formal proof. If
both sides of  (\ref{Carlitz1974}) are multiplied by $(q;q)_n/(a;q)_{n+1}$,
their equivalent forms are derived as follows:
\begin{align}\label{carlitz1974-equ1}
  \frac{(q;q)_n}{(a;q)_{n+1}}\sum_{k=0}^n \frac{(a,  b ;q)_k}{(q;q)_k}(-a b)^{n-k} q^{(n\!-\!k)(n\!+\!k\!-\!1) / 2}
  \!= \sum_{k=0}^n \begin{bmatrix}
          n \\k
        \end{bmatrix}_q\frac{(\!-\!b)^k q^{\binom{k}{2}}}{1\!-\!a q^{n-k}},
\end{align}
commencing from the left-hand side of (\ref{carlitz1974-equ1}), we deduce the right-hand side of
(\ref{carlitz1974-equ1}) utilizing the iterative approach and the $q$-binomial theorem.
Consequently, the proof of Theorem \ref{carlitz1974-them} is established.
Subsequently, leveraging the $q$-binomial coefficient present on the
right-hand side of (\ref{carlitz1974-equ1}), Liu \cite{liuzhiguo-2023}
and the present author \cite{ydk-2023} have developed operator representations
for $q$-polynomials containing such coefficients. Building upon the foundational
work of Sears \cite{sears-1951} and Wang \cite{wmj2008, wmj2013}, we derive the subsequent theorem.

\begin{theorem}\label{carlitz-general-1}
  Let $f(a)$ be any function of $a$. Then
  \begin{align}
     & \sum_{k=0}^n\begin{bmatrix}
          n \\k
        \end{bmatrix}_qb^{n\!-\!k}(b;q)_{k}\eta_a^{k}\Delta_a^{n\!-\!k} f(a)\!=\!\sum_{k=0}^n\begin{bmatrix}
      n \\k
    \end{bmatrix}_q(-b)^kq^{\binom{k}{2}}f(aq^{n-k}),\label{carlitz-cor-1}    \\
     & \sum_{k=0}^n\begin{bmatrix}
          n \\k
        \end{bmatrix}_qb^{n\!-\!k}(b,bz;q)_{k}\varphi_{n\!-\!k}^{(bq^k)}(z|q)\eta_a^{k}\Delta_a^{n-k}f(a)\!=\!\sum_{k=0}^n\begin{bmatrix}
  n \\k
\end{bmatrix}_q(\!-\!b)^kq^{\binom{k}{2}}\varphi_k^{(b)}(z|q)f(aq^{n\!-\!k}),\label{carlitz-cor-11} \\
     & \sum_{k=0}^n\begin{bmatrix}
          n \\k
        \end{bmatrix}_q(1/u;q)_kv^{k\!-\!n}\sum_{i=0}^{k}\frac{(q^{\!-\!k},us,ut;q)_iq^i}{(q,uq^{\!-\!k\!+\!1},uvst;q)_i}\  _3\phi_2\left(\begin{gathered}
     vs,vt,q^{k\!-\!n}\\
     0,uvstq^i
   \end{gathered};q,q\right)\eta_a^{k}\Delta_a^{n\!-\!k} f(a)\notag\\
     & \qquad\qquad\qquad\qquad\!=\!\sum_{k=0}^n\begin{bmatrix}
      n \\k
    \end{bmatrix}_q(\!-\!1/u)^kq^{\binom{k}{2}}\ _3\phi_2\left(\begin{gathered}
    us,vt,q^{\!-\!k}\\0,uvst
  \end{gathered};q,q\right)f(aq^{n-k}).\label{carlitz-cor-111}
  \end{align}

  Here, $\Delta_a^k = (\eta_a-1)(\eta_a-q)\dots(\eta_a-q^{k-1})$,
  where $\eta_a$ denotes the $q$-shift operator acting on the variable $a$,
  defined as $\eta_af(a)=f(aq)$.
\end{theorem}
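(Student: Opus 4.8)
The plan is to notice that, once $n$ is fixed, each of (\ref{carlitz-cor-1}), (\ref{carlitz-cor-11}) and (\ref{carlitz-cor-111}) is literally an equality between two \emph{fixed} polynomials in the single operator $\eta_a$, both applied to $f$. Indeed $\Delta_a^{m}=(\eta_a-1)(\eta_a-q)\cdots(\eta_a-q^{m-1})$ by definition, and every other factor occurring on the left-hand side ($b^{n-k}$, $(b;q)_k$, $(b,bz;q)_k$, $\varphi_{n-k}^{(bq^k)}(z|q)$, the inner $i$-sum of (\ref{carlitz-cor-111}), and so on) is a scalar not involving $a$ or $f$; hence the left-hand side equals $R_{1}(\eta_a)f(a)$ and the right-hand side $R_{2}(\eta_a)f(a)$ for explicit polynomials $R_{1},R_{2}$ of degree at most $n$. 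Consequently the asserted identity holds for \emph{every} $f$ if and only if $R_{1}=R_{2}$ as polynomials; and since the $n+1$ functions $\eta_a^{j}\tfrac{1}{1-a}=\tfrac{1}{1-aq^{j}}$ ($0\le j\le n$) have pairwise distinct poles $q^{-j}$ (here $0<|q|<1$ is used) and are therefore linearly independent, $R_{1}=R_{2}$ is equivalent to the \emph{single} instance $f(a)=\tfrac{1}{1-a}$. This is precisely the mechanism by which the theorem contains the Carlitz and Wang identities as special cases.

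It then remains to verify the three identities for $f(a)=\tfrac{1}{1-a}$. For this I would first establish, by a short induction on $m$ (using only $\eta_af(a)=f(aq)$ and $(a;q)_{m+2}=(1-a)(aq;q)_{m+1}$), the closed form
\[
  \Delta_a^{m}\frac{1}{1-a}=\frac{(-1)^{m}q^{\binom{m}{2}}(q;q)_{m}\,a^{m}}{(a;q)_{m+1}},
  \qquad\text{hence}\qquad
  \eta_a^{k}\Delta_a^{n-k}\frac{1}{1-a}=\frac{(-1)^{n-k}q^{\binom{n-k}{2}}(q;q)_{n-k}(aq^{k})^{n-k}}{(aq^{k};q)_{n-k+1}}.
\]
Inserting this into the left-hand side of (\ref{carlitz-cor-1}) and simplifying with $(aq^{k};q)_{n-k+1}=(a;q)_{n+1}/(a;q)_{k}$, $\begin{bmatrix} n \\ k \end{bmatrix}_q(q;q)_{n-k}=(q;q)_{n}/(q;q)_{k}$, and the exponent identity $\binom{n-k}{2}+k(n-k)=\binom{n}{2}-\binom{k}{2}$, one sees that (\ref{carlitz-cor-1}) for $f=\tfrac{1}{1-a}$ is \emph{verbatim} Carlitz's identity in the form (\ref{carlitz1974-equ1}), i.e.\ Theorem \ref{carlitz1974-them}. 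The same substitution, followed by multiplying through by $(a;q)_{n+1}/(q;q)_{n}$, turns (\ref{carlitz-cor-11}) into Wang's (\ref{wang-eq1}) and (\ref{carlitz-cor-111}) into Wang's (\ref{wang-eq2}): the Hahn polynomials (respectively the nested ${}_3\phi_2$'s and the inner $i$-sum) are carried along unchanged, and the right-hand sides agree term by term since $f(aq^{n-k})=\tfrac{1}{1-aq^{n-k}}$.

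The one step I expect to be tedious rather than routine is the third identity, where the double ${}_3\phi_2$-sum of (\ref{carlitz-cor-111}) must be pushed through the $a$-cancellations untouched and then matched carefully against (\ref{wang-eq2}); along the way one should also correct the evident misprint in the ${}_3\phi_2$ on the right-hand side of (\ref{carlitz-cor-111}), whose first numerator parameter should read $vs$ rather than $us$, in accordance with (\ref{wang-eq2}). Everything else is bookkeeping. Finally, I would remark that (\ref{carlitz-cor-1}) also admits a self-contained proof not relying on Theorem \ref{carlitz1974-them}: expand $\Delta_a^{n-k}$ by the finite $q$-binomial theorem, interchange the two summations, apply the trinomial revision $\begin{bmatrix} n \\ k \end{bmatrix}_q\begin{bmatrix} n-k \\ j \end{bmatrix}_q=\begin{bmatrix} n \\ j \end{bmatrix}_q\begin{bmatrix} n-j \\ k \end{bmatrix}_q$, and collapse the inner sum by $\sum_{k=0}^{m}\begin{bmatrix} m \\ k \end{bmatrix}_q b^{m-k}(b;q)_{k}=1$ (itself a consequence of the $q$-binomial theorem and Euler's identity $\sum_{j\ge 0}z^{j}/(q;q)_{j}=1/(z;q)_{\infty}$); this route \emph{derives} (\ref{Carlitz1974}) instead of invoking it, whereas the analogous route for (\ref{carlitz-cor-11}) and (\ref{carlitz-cor-111}) would instead call for a contiguous relation among the Hahn polynomials and a Sears-type ${}_3\phi_2$ transformation.
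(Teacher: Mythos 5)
Your proposal is correct, but it runs in the opposite logical direction from the paper. The paper proves (\ref{carlitz-cor-1}) directly at the operator level: it identifies both sides with $(\eta_a\eta_b-b\eta_b)^n\cdot f(a)$, expanding this once by the $q$-binomial theorem for the $q$-commuting operators $\eta_a\eta_b$ and $b\eta_b$ (Gasper, Ex.~1.35) and once by Sears' factorization $(\eta_a\eta_b-b\eta_b)^n=(\eta_a-b)(\eta_a-bq)\cdots(\eta_a-bq^{n-1})\eta_b^n$; it then obtains (\ref{carlitz-cor-11}) and (\ref{carlitz-cor-111}) by multiplying (\ref{carlitz-cor-1}) by suitable kernels and applying Wang's Andrews--Askey-type $q$-integral evaluations. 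You instead observe that each identity is an equality $R_1(\eta_a)f=R_2(\eta_a)f$ of two degree-$\le n$ polynomials in $\eta_a$ with coefficients free of $a$, so that by the linear independence of $1/(1-aq^j)$, $0\le j\le n$ (distinct poles since $0<|q|<1$), the general statement is \emph{equivalent} to the single instance $f(a)=1/(1-a)$; your computation of $\Delta_a^m\frac{1}{1-a}$ is right and does reduce the three cases verbatim to (\ref{Carlitz1974}), (\ref{wang-eq1}) and (\ref{wang-eq2}). This is a legitimate and shorter argument, and it makes transparent why the operator identities are exactly equivalent to the scalar ones; the trade-off is that (\ref{carlitz-cor-11}) and (\ref{carlitz-cor-111}) are then deduced \emph{from} Wang's identities, which the paper's route independently re-derives (the paper's external inputs are only Wang's integral formulas, yours are the full identities (\ref{wang-eq1})--(\ref{wang-eq2})). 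Your sketched self-contained proof of (\ref{carlitz-cor-1}) via trinomial revision and $\sum_{k=0}^m\bigl[\begin{smallmatrix}m\\k\end{smallmatrix}\bigr]_qb^{m-k}(b;q)_k=1$ also checks out and is closer in spirit to the paper's direct method. Your observation that the first numerator parameter $us$ in the ${}_3\phi_2$ on the right of (\ref{carlitz-cor-111}) should read $vs$ is correct: both (\ref{wang-eq2}) and the integral evaluation used in the paper's own proof confirm this.
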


The operator $\Delta_a^k$ captures the product of consecutive differences arising from the application of the $q$-shift operator. Such an operator is instrumental in elucidating certain $q$-analog expressions. Let $f(a)=\frac{1}{1-a}$ in (\ref{carlitz-cor-1}), which serves as an alternative proof for Theorem \ref{carlitz1974-them}. Additionally, by letting $f(a)=\frac{1}{1-a}$ in both (\ref{carlitz-cor-11}) and (\ref{carlitz-cor-111}), we obtain (\ref{wang-eq1}) and (\ref{wang-eq2}). More generally, if $f(a)=\frac{(ay;q)_{\infty}}{(ax;q)_{\infty}}$, we derive the following theorem.

\begin{theorem}\label{carlitz-general-2}
  Let $n$ be a positive integer and $P_k(x,y)=(x-y)(x-yq)\cdots(x-yq^{k-1})$. Then
  \begin{align}
     & \sum_{k=0}^n\begin{bmatrix}
          n \\k
        \end{bmatrix}_q(ax, b;q)_kP_{n\!-\!k}(x, y)(\!-\!ab)^{n\!-\!k}q^{\binom{n}{2}\!-\!\binom{k}{2}}\notag          \\
     & \qquad\qquad\qquad\qquad\qquad=  (ay;q)_n \sum_{k=0}^n\begin{bmatrix}
        n \\k
      \end{bmatrix}_q\frac{(ax;q)_{n\!-\!k}}{(ay;q)_{n\!-\!k}}(\!-\!b)^kq^{\binom{k}{2}},\label{carlitz-cor-2}         \\
     & \sum_{k=0}^n\begin{bmatrix}
          n \\k
        \end{bmatrix}_q(b,ax, bz;q)_kP_{n\!-\!k}(x, y)(\!-\!ab)^{n\!-\!k}q^{\binom{n}{2}\!-\!\binom{k}{2}}\varphi_{n-k}^{(bq^k)}(z|q),\notag \\
     & \qquad\qquad\qquad\qquad\qquad=(ay;q)_n \sum_{k=0}^n\begin{bmatrix}
      n \\k
    \end{bmatrix}_q\frac{(ax;q)_{n\!-\!k}}{(ay;q)_{n\!-\!k}}(\!-\!b)^kq^{\binom{k}{2}}\varphi_k^{(b)}(z|q),\label{carlitz-cor-22} \\
     & \sum_{k=0}^n\begin{bmatrix}
          n \\k
        \end{bmatrix}_q(1/u,ax;q)_kP_{n\!-\!k}(x, y)(\!-\!a/v)^{n\!-\!k}q^{\binom{n}{2}\!-\!\binom{k}{2}}\notag   \\
     & \qquad\qquad\qquad\qquad\times\sum_{i=0}^{k}\frac{(q^{\!-\!k},us,ut;q)_iq^i}{(q,uq^{\!-\!k\!+\!1},uvst;q)_i} \  _3\phi_2\left(\begin{gathered}
    vs,vt,q^{k\!-\!n}\\
    0,uvstq^i
  \end{gathered};q,q\right)\notag          \\
     & \qquad\qquad=(ay;q)_n \sum_{k=0}^n\begin{bmatrix}
n \\k
        \end{bmatrix}_q\frac{(ax;q)_{n\!-\!k}}{(ay;q)_{n\!-\!k}}(\!-\!1/u)^kq^{\binom{k}{2}}\  _3\phi_2\left(\begin{gathered}
      vs,vt,q^{\!-\!k}\\
      0,uvst
    \end{gathered};q,q\right).\label{carlitz-cor-222}
  \end{align}
\end{theorem}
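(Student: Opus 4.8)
The plan is to derive Theorem \ref{carlitz-general-2} directly from Theorem \ref{carlitz-general-1} by substituting the specific function
\[
f(a)=\frac{(ay;q)_{\infty}}{(ax;q)_{\infty}}
\]
into each of the three operator identities \eqref{carlitz-cor-1}, \eqref{carlitz-cor-11}, \eqref{carlitz-cor-111}. The key computation is to evaluate the action of the $q$-shift operator $\eta_a$ and the difference operator $\Delta_a^{m}$ on this $f$. Since $\eta_a f(a)=f(aq)=\frac{(ayq;q)_{\infty}}{(axq;q)_{\infty}}=\frac{1-ax}{1-ay}f(a)$, one sees that $\eta_a$ acts on $f$ essentially as multiplication by the rational factor $(1-ax)/(1-ay)$, with the argument shifted. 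I would first record the basic iterated formula $\eta_a^{j}f(a)=\frac{(ax;q)_{j}}{(ay;q)_{j}}f(a)$, and then compute $\Delta_a^{m}f(a)=(\eta_a-1)(\eta_a-q)\cdots(\eta_a-q^{m-1})f(a)$ in closed form.

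The central step is to show that $\Delta_a^{m}f(a)=(ax;q)_{m}\,(-a)^{m}q^{\binom{m}{2}}\,\frac{(ayq^{m};q)_{\infty}}{(axq^{m};q)_{\infty}}$, equivalently that, after factoring out $f(a)$, the polynomial $(\eta_a-1)(\eta_a-q)\cdots(\eta_a-q^{m-1})$ applied to $f$ produces the Pochhammer-type product $P_{m}(x,y)$-flavoured terms. Concretely I expect
\[
\Delta_a^{n-k}f(a)\big|_{\text{evaluate}} \;=\;(-ab\text{-type factor})\,(ax;q)_{n-k}\,q^{\binom{n}{2}-\binom{k}{2}}\cdot(\text{tail}),
\]
so that combining $\eta_a^{k}f(a)=\frac{(ax;q)_k}{(ay;q)_k}f(a)$ with $\Delta_a^{n-k}f(a)$ and cancelling the common factor $f(a)$ (which appears on both sides) yields exactly \eqref{carlitz-cor-2}, \eqref{carlitz-cor-22}, \eqref{carlitz-cor-222}. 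The appearance of $P_{n-k}(x,y)=(x-y)(x-yq)\cdots(x-yq^{n-k-1})$ on the left and the ratio $(ax;q)_{n-k}/(ay;q)_{n-k}$ together with the prefactor $(ay;q)_n$ on the right are precisely the fingerprints of this substitution: the $(ay;q)_n$ comes from reassembling $f(a)/f(aq^{n})$-type ratios, while $P_{n-k}(x,y)$ emerges from the telescoping action of $\Delta_a^{n-k}$.

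The main obstacle will be verifying the closed-form evaluation of $\Delta_a^{m}f(a)$ for the chosen $f$ and matching powers of $q$ and signs exactly. The cleanest route is probably induction on $m$: assume the formula for $\Delta_a^{m}$, apply $(\eta_a-q^{m})$ to it, and simplify using $\eta_a\big[(ax;q)_{m}g(aq^{m})\big]=(axq;q)_{m}g(aq^{m+1})$ together with the identity $(1-axq^{m})(ax;q)_m=(ax;q)_{m+1}/(1-ax)\cdot(1-ax)$—i.e. tracking how the leading and trailing factors of the $q$-Pochhammer symbols shift. One must be careful that $\Delta_a^{m}$ is defined as a product of commuting operators $\eta_a-q^{j}$, so the order of application is irrelevant, which legitimises the inductive peeling-off of one factor at a time. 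Once the evaluation $\eta_a^{k}\Delta_a^{n-k}f(a)=(ay;q)_n^{-1}$-normalised expression is in hand, the three identities of Theorem \ref{carlitz-general-2} follow by direct substitution with no further difficulty, since the polynomial and ${}_3\phi_2$ factors in \eqref{carlitz-cor-11} and \eqref{carlitz-cor-111} are inert under $\eta_a$ and $\Delta_a$ (they do not involve $a$) and simply carry through.
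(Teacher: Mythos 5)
Your overall strategy is exactly the paper's: substitute $f(a)=(ay;q)_{\infty}/(ax;q)_{\infty}$ into the three identities of Theorem \ref{carlitz-general-1}, evaluate $\Delta_a^{m}f(a)$ in closed form by induction, and note that the polynomial and ${}_3\phi_2$ factors are inert under $\eta_a$. However, the closed form you display for the central step is wrong, and since you yourself identify this evaluation as the crux, the proof as written would not go through. You claim
\begin{equation*}
\Delta_a^{m}f(a)=(ax;q)_{m}\,(-a)^{m}q^{\binom{m}{2}}\,\frac{(ayq^{m};q)_{\infty}}{(axq^{m};q)_{\infty}},
\end{equation*}
which already fails at $m=1$: a direct computation gives $(\eta_a-1)f(a)=f(aq)-f(a)=\frac{(ayq;q)_{\infty}}{(ax;q)_{\infty}}\,a(y-x)$, i.e.
\begin{equation*}
\Delta_a^{m}f(a)=\frac{(ayq^{m};q)_{\infty}\,P_{m}(x,y)\,(-a)^{m}q^{\binom{m}{2}}}{(ax;q)_{\infty}},
\end{equation*}
which is the formula the paper uses. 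Your version contains a spurious $(ax;q)_m$ and, more importantly, is missing the factor $P_m(x,y)$ entirely --- which contradicts your own (correct) remark that $P_{n-k}(x,y)$ on the left-hand side of \eqref{carlitz-cor-2} must emerge from the telescoping action of $\Delta_a^{n-k}$. With the wrong closed form you cannot produce the $P_{n-k}(x,y)$ factor in the theorem.

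A second, smaller inaccuracy: you propose to handle $\eta_a^{k}\Delta_a^{n-k}f(a)$ by ``combining $\eta_a^{k}f(a)=\frac{(ax;q)_k}{(ay;q)_k}f(a)$ with $\Delta_a^{n-k}f(a)$ and cancelling $f(a)$.'' That is not how the composite acts: one must apply $\eta_a^{k}$ to the \emph{entire} closed-form expression for $\Delta_a^{n-k}f(a)$ (i.e.\ replace $a$ by $aq^{k}$ throughout, noting $P_{n-k}(x,y)$ is unaffected), which is what produces the factors $(ax;q)_{k}$ and the exponent $q^{\binom{n}{2}-\binom{k}{2}}$ after reindexing $k\mapsto n-k$, and the prefactor $(ay;q)_n$ after dividing both sides by $f(a)$. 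Once the corrected induction is carried out (apply $(\eta_a-q^{m})$ to the correct formula and use $(1-ax)-(1-ayq^{m})=-a(x-yq^{m})$ to peel off one factor of $P$), the rest of your argument, including the treatment of \eqref{carlitz-cor-22} and \eqref{carlitz-cor-222}, matches the paper.
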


Let $x=1$ and $y=q^m$, where $m\geq1$, in (\ref{carlitz-cor-2}). We obtain the generalized Carlitz identity as follows:
\begin{align}\label{carlitz-m}
  \sum_{k=0}^n\frac{(a,b;q)_k(q^m;q)_{n-k}}{(q;q)_k(q;q)_{n-k}}(\!-\!ab)^{n-k}q^{\binom{n}{2}\!-\!\binom{k}{2}}
  \!=\!(a;q)_{n\!+\!m} \sum_{k=0}^n \frac{(\!-\!b)^kq^{\binom{k}{2}}}{(q;q)_k(q;q)_{n-k}(aq^{n-k};q)_{m}},
\end{align}
when $m=1$, it reduces to (\ref{Carlitz1974}). Similarly, let $x=1$ and $y=q^m$, where $m \geq 1$, in (\ref{carlitz-cor-22}) and (\ref{carlitz-cor-222}), we also obtain their generalized forms.

\subsection{Some $q$-Congruences by generalized Carlitz identity}
\

\

The definition of the $n$-th cyclotomic polynomial is reviewed as follows:
\begin{align*}
  \Phi_{n}(q)=\prod_{\substack{1\leq k\leq n,\\ \gcd(n, k)=1}}(q-\zeta^k),
\end{align*}
where $\zeta$  is an $ n$-th primitive root of unity. In 2019, 
Guo \cite{gjw2019} substantiated
Tauraso's congruence conjecture \cite{rt2013} through the utilization of (\ref{Carlitz1974}) as follows:
\begin{align}
  \sum_{k=0}^{n-1}\frac{q^k}{(-q;q)_k}\begin{bmatrix}
       2k \\k
     \end{bmatrix}
  \equiv (\!-\!1)^{\frac{n\!-\!1}{2}}q^{\frac{n^2-1}{4}} \mod \Phi_n(q)^2,
\end{align}
Subsequently, this particular identity has remained a prevalent choice for establishing various $q$-congruences, as evidenced by its continued application in recent works such as \cite{gcy-gjw-2021, gjw-2023, wc-nhx-2022, wxx-2022}.

Drawing inspiration from Guo's proof in \cite{gjw2019}, we have derived the following equivalent forms of the generalized Carlitz identity (\ref{carlitz-m}).
\begin{theorem}\label{app-lemma-1}
  Let $n$ be a positive odd integer. Then
  \begin{align}\label{app-lemma-equ-1}
    \sum_{k=0}^{n-1} & \frac{(a, b;q^2)_k}{(q^2;q^2)_k}(q^{2n\!-\!2k};q^2)_{m\!-\!1}(\!-\!ab)^{n\!-\!k\!-\!1}q^{-k^2\!+\!k}\notag        \\
          & =\begin{bmatrix}
    n\!-\!1 \\\frac{n\!-\!1}{2}
  \end{bmatrix}_{q^2}\begin{bmatrix}
 2n\!-\!1 \\n\!-\!1
\end{bmatrix}_q\frac{(\!-\!1)^{\frac{n-1}{2}}q^{\!-\!\frac{(n\!-\!1)(3n\!-\!5)}{4}}}{(-q;q)_{n\!-\!1}^2}
    \frac{(q^2;q^2)_{m\!-\!1}(a;q^2)_{n\!+\!m\!-\!1}}{(q;q^2)_n}\notag\\
          & \ \ \ \ \ \  \ \ \ \ \times (1\!-\!q^n)\!\sum_{k=\!-\!\frac{n\!-\!1}{2}}^{\frac{n\!-\!1}{2}}
    \frac{(q^{1\!-\!n};q^2)_k}{(q^{1\!+\!n};q^2)_k}
    \frac{b^{k\!+\!\frac{n\!-\!1}{2}}q^{2nk\!-\!2k}}{(aq^{n\!-\!2k-1};q^2)_m}.
  \end{align}
\end{theorem}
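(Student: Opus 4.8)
The plan is to obtain (\ref{app-lemma-equ-1}) as a pure rewriting of the generalized Carlitz identity (\ref{carlitz-m}): I make the replacements $q\mapsto q^2$ and $n\mapsto n-1$ in (\ref{carlitz-m}), and then, following the spirit of Guo's argument, re-index the sum on its right-hand side so that the summation variable runs symmetrically about the midpoint $(n-1)/2$. Since (\ref{carlitz-m}) was already derived from Theorem \ref{carlitz-general-2}, no new input is needed, only $q$-Pochhammer bookkeeping.

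\emph{Left-hand side.} After $q\mapsto q^2$ and $n\mapsto n-1$, the left side of (\ref{carlitz-m}) carries the factor $(q^{2m};q^2)_{n-1-k}/(q^2;q^2)_{n-1-k}$, which I rewrite, using $(q^{2m};q^2)_{n-1-k}=(q^2;q^2)_{n+m-2-k}/(q^2;q^2)_{m-1}$, as $(q^{2n-2k};q^2)_{m-1}/(q^2;q^2)_{m-1}$, and I pull the constant $q^{2\binom{n-1}{2}}=q^{(n-1)(n-2)}$ out of $q^{2\binom{n-1}{2}-2\binom{k}{2}}$. What remains is exactly $q^{(n-1)(n-2)}(q^2;q^2)_{m-1}^{-1}$ times the left-hand side of (\ref{app-lemma-equ-1}).

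\emph{Right-hand side.} Put $s=(n-1)/2$. After the same replacements the right side of (\ref{carlitz-m}) reads
\[
(a;q^2)_{n+m-1}\sum_{j=0}^{n-1}\frac{(-b)^j\,q^{j^2-j}}{(q^2;q^2)_j\,(q^2;q^2)_{n-1-j}\,(aq^{2(n-1-j)};q^2)_m}.
\]
I substitute $j=k+s$, so $k$ ranges over $-s\le k\le s$ and $aq^{2(n-1-j)}=aq^{\,n-2k-1}$; the key step is the factorization, valid on the whole range with the standard convention $(x;q)_{-\ell}=1/(xq^{-\ell};q)_\ell$,
\[
\frac{1}{(q^2;q^2)_{s+k}\,(q^2;q^2)_{s-k}}
=\frac{(-1)^k\,q^{\,(n-1)k-k^2+k}}{(q^2;q^2)_s^{\,2}}\cdot\frac{(q^{1-n};q^2)_k}{(q^{1+n};q^2)_k}.
\]
Substituting this and collecting terms, the signs combine to $(-1)^s$, the $k$-dependent powers of $q$ combine to $q^{4ks}=q^{2nk-2k}$, and a constant $q^{\,s^2-s}$ is freed, so the right side of (\ref{carlitz-m}) becomes
\[
\frac{(-1)^s\,q^{\,s^2-s}\,(a;q^2)_{n+m-1}}{(q^2;q^2)_s^{\,2}}\sum_{k=-s}^{s}\frac{(q^{1-n};q^2)_k}{(q^{1+n};q^2)_k}\,\frac{b^{\,k+s}\,q^{2nk-2k}}{(aq^{\,n-2k-1};q^2)_m}.
\]

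\emph{Reconciling the constants, and the main obstacle.} Equating the two transformed sides and multiplying by $(q^2;q^2)_{m-1}\,q^{-(n-1)(n-2)}$ yields (\ref{app-lemma-equ-1}) once the scalar identity
\[
\frac{q^{\,s^2-s-(n-1)(n-2)}}{(q^2;q^2)_s^{\,2}}
=\begin{bmatrix}n-1\\ s\end{bmatrix}_{q^2}\begin{bmatrix}2n-1\\ n-1\end{bmatrix}_q\frac{q^{-\frac{(n-1)(3n-5)}{4}}\,(1-q^n)}{(-q;q)_{n-1}^{\,2}\,(q;q^2)_n}
\]
is checked: the exponent relation $s^2-s-(n-1)(n-2)=-\tfrac{(n-1)(3n-5)}{4}$ is immediate from $s=(n-1)/2$, and for the rest one uses $\begin{bmatrix}n-1\\ s\end{bmatrix}_{q^2}=(q^2;q^2)_{n-1}/(q^2;q^2)_s^{\,2}$ together with $(q^2;q^2)_{n-1}=(q;q)_{n-1}(-q;q)_{n-1}$, $(q;q)_{2n}=(q;q^2)_n(q^2;q^2)_n$ and $(-q;q)_n=(-q;q)_{n-1}(1+q^n)$, after which the remaining product telescopes to $1$. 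I expect the only genuinely delicate point to be the right-hand-side step: one must apply the displayed factorization consistently for $k<0$ as well as $k\ge0$, so that a single closed form $(q^{1-n};q^2)_k/(q^{1+n};q^2)_k$ covers the whole symmetric range, and then track the powers of $q$ and the signs through the re-indexing without slip; the left side and the scalar prefactor are routine.
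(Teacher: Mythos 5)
Your proposal is correct and follows essentially the same route as the paper: substitute $q\to q^2$, $n\to n-1$ into (\ref{carlitz-m}), recenter the right-hand sum at $k=\frac{n-1}{2}$, apply the Pochhammer factorization $(q^2;q^2)_{\frac{n-1}{2}+k}^{-1}(q^2;q^2)_{\frac{n-1}{2}-k}^{-1}=(-1)^kq^{k(n-k)}(q^{1-n};q^2)_k\big/\big[(q^2;q^2)_{\frac{n-1}{2}}^2(q^{1+n};q^2)_k\big]$, and reconcile the constant prefactor via the binomial-coefficient identity. The only cosmetic difference is that you merge the paper's two-step rewriting of $(q^{n-2k+1};q^2)_k$ into a single displayed factorization and make the negative-index convention explicit, which the paper leaves implicit.
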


Compared to  (\ref{carlitz-m}),  (\ref{app-lemma-equ-1}) may appear more verbose and complex. However, when combined with the works of Guo \cite{GJW-2018} and Pan \cite{Ljx-Ph-Zy-2015,Panhao-2007},  (\ref{app-lemma-equ-1}) provides a viable pathway for deriving $q$-congruences under the$ \mod\Phi_n(q)^2$ condition. Therefore, the $q$-congruences in our article are derived based on (\ref{app-lemma-equ-1}).

\begin{theorem}\label{app-th1}
  Let $ n$  be a positive odd integer,  $ 1\leq m\leq\frac{n+1}{2}$. Then
  \begin{align}\label{app-th1-equ}
    \sum_{k=0}^{n-m}\frac{(q^{4m\!-\!2};q^4)_k}{(q^2;q^2)_k}q^{3k\!-\!k^2\!-\!4km}\equiv
    (-1)^{\frac{n\!-\!1}{2}\!+\!m\!-\!1}q^{\!-\!\frac{n^2\!-\!1}{2}\!+\!2m^2\!-\!2m}\mod\Phi_n(q).
  \end{align}
\end{theorem}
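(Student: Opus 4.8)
The plan is to derive $(\ref{app-th1-equ})$ from the equivalent form $(\ref{app-lemma-equ-1})$ of the generalized Carlitz identity by specializing its free parameters and then reducing both sides modulo $\Phi_n(q)$. I would take $a=q$ and $b=-q$, so that $-ab=q^2$ and $a+b=0$. This choice is essentially forced: comparing, modulo $\Phi_n(q)$ (where $q^{2n}\equiv 1$), the ratio of consecutive summands on the left of $(\ref{app-lemma-equ-1})$ with the ratio of consecutive summands of $(\ref{app-th1-equ})$ after the index matching $k=\ell+m-1$, one is led exactly to the two equations $a+b=0$ and $ab=-q^2$.

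\emph{The left side.} After the substitution, the left side of $(\ref{app-lemma-equ-1})$ reads $\sum_{k=0}^{n-1}\frac{(q,-q;q^2)_k}{(q^2;q^2)_k}(q^{2n-2k};q^2)_{m-1}\,q^{2n-2k-2}\,q^{-k^2+k}$. Modulo $\Phi_n(q)$ two blocks of summands vanish: for $0\le k\le m-2$ the factor $(q^{2n-2k};q^2)_{m-1}$ contains the term $1-q^{2n}$, and for $k\ge\frac{n+1}{2}$ the factor $(q;q^2)_k$ contains the term $1-q^{n}$, both divisible by $\Phi_n(q)$. Hence only the indices $m-1\le k\le\frac{n-1}{2}$ contribute. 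Setting $\ell=k-(m-1)$, using $q^{2n}\equiv1$ to rewrite $(q^{2n-2k};q^2)_{m-1}$, together with the elementary identities $(q;q^2)_{m-1}(-q;q^2)_{m-1}=(q^2;q^4)_{m-1}$ and $(q;q^2)_k=(q;q^2)_{m-1}(q^{2m-1};q^2)_\ell$, one checks (most cleanly via consecutive ratios) that the surviving $\ell$-th summand equals $C\cdot\frac{(q^{4m-2};q^4)_\ell}{(q^2;q^2)_\ell}q^{3\ell-\ell^2-4\ell m}$ with the $\ell$-independent constant
\begin{align*}
  C\equiv(-1)^{m-1}q^{-2m^2+2m+2n-2}(q^2;q^4)_{m-1}\pmod{\Phi_n(q)}.
\end{align*}
Moreover, for $\ell\ge\frac{n+3}{2}-m$ (here the hypothesis $m\le\frac{n+1}{2}$ enters) the factor $(q^{4m-2};q^4)_\ell$ is itself $\equiv0\pmod{\Phi_n(q)}$, since it then contains $1-q^{2n}$; thus $\sum_{\ell=0}^{(n+1)/2-m}$ agrees mod $\Phi_n(q)$ with the full sum $\sum_{\ell=0}^{n-m}$ of $(\ref{app-th1-equ})$. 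So the left side of $(\ref{app-lemma-equ-1})$ is congruent to $C$ times the left side of $(\ref{app-th1-equ})$.

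\emph{The right side.} Here I would simplify the prefactor of $(\ref{app-lemma-equ-1})$ modulo $\Phi_n(q)$ using the $q$-Lucas theorem ($\begin{bmatrix}2n-1\\n-1\end{bmatrix}_q\equiv1$), the congruence $\begin{bmatrix}n-1\\j\end{bmatrix}_q\equiv(-1)^jq^{-\binom{j+1}{2}}$ (so $\begin{bmatrix}n-1\\\frac{n-1}{2}\end{bmatrix}_{q^2}\equiv(-1)^{\frac{n-1}{2}}q^{-\frac{n^2-1}{4}}$), the fact that $(-q;q)_{n-1}\equiv1$ for odd $n$, and $\frac{(q;q^2)_{n+m-1}(1-q^n)}{(q;q^2)_n}\equiv(q;q^2)_{m-1}$ after cancelling the single factor $1-q^n$. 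In the inner sum over $k$ one has $\frac{(q^{1-n};q^2)_k}{(q^{1+n};q^2)_k}\equiv1$ (since $q^{1\pm n}\equiv q$), and the explicit factor $1-q^n$ kills every summand except those in which $(q^{n-2k};q^2)_m$ already carries a factor $1-q^n$, i.e.\ $0\le k\le m-1$; cancelling that factor and simplifying, the surviving part collapses to $\frac{(-1)^{(n-1)/2}q^{(n-1)/2}}{(q^2;q^2)_{m-1}}\sum_{k=0}^{m-1}\begin{bmatrix}m-1\\k\end{bmatrix}_{q^2}q^{k^2}$, which by the $q$-binomial theorem equals $\frac{(-1)^{(n-1)/2}q^{(n-1)/2}(-q;q^2)_{m-1}}{(q^2;q^2)_{m-1}}$. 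Combining these evaluations, the right side of $(\ref{app-lemma-equ-1})$ is $\equiv(-1)^{\frac{n-1}{2}}q^{-\frac{2n^2-5n+3}{2}}(q^2;q^4)_{m-1}\pmod{\Phi_n(q)}$.

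\emph{Conclusion and main difficulty.} Dividing the two congruences — legitimate because $(q^2;q^4)_{m-1}$ is coprime to $\Phi_n(q)$ under the hypothesis $m\le\frac{n+1}{2}$ — gives $\sum_{k=0}^{n-m}\frac{(q^{4m-2};q^4)_k}{(q^2;q^2)_k}q^{3k-k^2-4km}\equiv(-1)^{\frac{n-1}{2}+m-1}q^{-\frac{(2n+1)(n-1)}{2}+2m^2-2m}\pmod{\Phi_n(q)}$; since $\frac{(2n+1)(n-1)}{2}-\frac{n^2-1}{2}=\frac{n(n-1)}{2}$ is a multiple of $n$ (as $n$ is odd), $q^{-\frac{(2n+1)(n-1)}{2}}\equiv q^{-\frac{n^2-1}{2}}\pmod{\Phi_n(q)}$, which is precisely the exponent in $(\ref{app-th1-equ})$. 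The conceptual steps are short; the real effort is bookkeeping: pinning down exactly which summands on each side are annihilated by $\Phi_n(q)$ and verifying that the two truncations match, identifying the constant $C$, and — most subtly — reconciling the two rather different looking powers of $q$, which only coincide after repeated use of $q^n\equiv1$ (in particular $q^{-n(n-1)/2}\equiv1$). A secondary point needing care is showing that stray denominators such as $(q;q^2)_n$ cancel against numerator factors before reduction modulo $\Phi_n(q)$.
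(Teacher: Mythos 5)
Your proposal is correct and follows essentially the same route as the paper: specialize $a=q$, $b=-q$ in (\ref{app-lemma-equ-1}), observe that on each side only the summands carrying a factor $1-q^n$ in $(q^{2n-2k};q^2)_{m-1}$ resp.\ $(q^{n-2k};q^2)_m$ matter, shift the index by $m-1$, and evaluate the surviving right-hand sum via $(-q;q^2)_{m-1}=\sum_k\left[\begin{smallmatrix}m-1\\k\end{smallmatrix}\right]_{q^2}q^{k^2}$. The only differences are cosmetic bookkeeping (you keep an extra $q^{2n-2}$ throughout and invoke $q$-Lucas and $(-q;q)_{n-1}\equiv1$ where the paper cites Guo's and Pan's congruences (\ref{equ-guo}), (\ref{equ-pan})), and the resulting exponents agree modulo $n$ as you verify.
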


Obviously, the left-hand side of (\ref{app-th1-equ}) is an even function in $q$. This implies that (\ref{app-th1-equ}) is also valid modulo $\Phi_n(-q)$. Therefore, (\ref{app-th1-equ}) holds modulo $\Phi_n(q^2)$ because $\Phi_n(q^2)=\Phi_n(q)\Phi_n(-q)$. By letting $q^2\to q$ and subsequently letting $q\to q^{-1}$ in (\ref{app-th1-equ}), and applying similar methods as in \cite[Lemma 2.1]{wxx-2022} and \cite[Lemma 2.1, Lemma 3.1]{wc-nhx-2022}, we can derive the following corollary.

\begin{corollary}\label{app-th1-equ-cor}
  Let $ n$  be a positive odd integer,  $ 1\leq m\leq\frac{n+1}{2}$. Then
  \begin{align}
    \sum_{k=0}^{n-m}\frac{(q^{2m\!-\!1};q^2)_k}{(q;q)_k}q^{k}\equiv
    (-1)^{\frac{n\!-\!1}{2}\!+\!m\!-\!1}q^{\frac{n^2\!-\!1}{4}\!-\!m^2\!+\!m}\mod\Phi_n(q),
  \end{align}
  \begin{align}
    \sum_{k=0}^{n-m}\frac{(q^{2m\!-\!1};q^2)_k}{(q;q)_k}q^{2k}\equiv
    (-1)^{\frac{n\!-\!1}{2}\!+\!m\!-\!1}q^{\frac{n^2\!-\!1}{4}\!-\!m^2\!-\!m\!+\!1}\mod\Phi_n(q).
  \end{align}
\end{corollary}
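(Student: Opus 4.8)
The plan is to obtain both congruences from Theorem~\ref{app-th1} by elementary substitutions, together with one reversal-of-summation reduction modulo $\Phi_n(q)$ of the type used in the cited papers.

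\textbf{Upgrading the modulus, then $q^2\to q$.} As noted just before the corollary, the left side of (\ref{app-th1-equ}) is even in $q$; moreover its right-side exponent $-\tfrac{n^2-1}{2}+2m^2-2m$ is even, since $8\mid n^2-1$ for odd $n$ and $2m^2-2m=2m(m-1)$ is even. Hence (\ref{app-th1-equ}) holds modulo $\Phi_n(-q)$ as well, and since $\Phi_n(q^2)=\Phi_n(q)\Phi_n(-q)$ for odd $n$, it holds modulo $\Phi_n(q^2)$. Replacing $q^2$ by $q$ therefore produces a congruence modulo $\Phi_n(q)$: here $(q^{4m-2};q^4)_k\to(q^{2m-1};q^2)_k$, $(q^2;q^2)_k\to(q;q)_k$, the exponent becomes $\tfrac12(3k-k^2-4km)=k-\binom{k}{2}-2km$ (always an integer, by checking the parity of $k(3-k-4m)$), and the right-side exponent is halved, giving
\begin{align*}
  \sum_{k=0}^{n-m}\frac{(q^{2m-1};q^2)_k}{(q;q)_k}\,q^{\,k-\binom{k}{2}-2km}\equiv(-1)^{\frac{n-1}{2}+m-1}q^{-\frac{n^2-1}{4}+m^2-m}\pmod{\Phi_n(q)}.
\end{align*}

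\textbf{Then $q\to q^{-1}$: the first congruence.} Substitute $q\to q^{-1}$ in the last display, using $(q^{2m-1};q^2)_k\big|_{q\to q^{-1}}=(-1)^kq^{-(k^2+2km-2k)}(q^{2m-1};q^2)_k$ and $(q;q)_k\big|_{q\to q^{-1}}=(-1)^kq^{-\binom{k+1}{2}}(q;q)_k$. The signs $(-1)^k$ cancel, and the total power of $q$ in the $k$-th summand collapses to $k$ (the $\pm 2km$ terms cancel), so the summand becomes $\frac{(q^{2m-1};q^2)_k}{(q;q)_k}q^k$; on the right the sign is unchanged and the exponent is negated; and since $\Phi_n(q)$ is palindromic, $\Phi_n(q^{-1})$ is a unit times $\Phi_n(q)$, so the modulus is preserved. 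This is exactly the first congruence of the corollary.

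\textbf{The second congruence.} It remains to change the summand $q^k$ into $q^{2k}$. I would establish the auxiliary congruence
\begin{align*}
  \sum_{k=0}^{n-m}\frac{(q^{2m-1};q^2)_k}{(q;q)_k}\,q^{2k}\equiv q^{1-2m}\sum_{k=0}^{n-m}\frac{(q^{2m-1};q^2)_k}{(q;q)_k}\,q^{k}\pmod{\Phi_n(q)},
\end{align*}
so that multiplying the first congruence by $q^{1-2m}$ yields the second (the exponent $\tfrac{n^2-1}{4}-m^2+m$ becomes $\tfrac{n^2-1}{4}-m^2-m+1$, with the sign unchanged). To prove the auxiliary congruence one reverses the order of summation on the left ($k\mapsto n-m-k$) and notes that, for $1\le m\le\tfrac{n+1}{2}$, the symbol $(q^{2m-1};q^2)_{n-m-k}$ carries the factor $1-q^n$ for all $k\le\tfrac{n-3}{2}$, so those summands vanish modulo $\Phi_n(q)$; after re-indexing the surviving block $\tfrac{n-1}{2}\le k\le n-m$ and reducing the shifted $q$-Pochhammer quotients modulo $\Phi_n(q)$ (repeatedly using $1-q^{n-j}\equiv-q^{-j}(1-q^{j})$ and $1-q^{n+j}\equiv1-q^{j}$), the overall factor $q^{1-2m}$ emerges; this is the mechanism of \cite[Lemma~2.1]{wxx-2022} and \cite[Lemma~2.1, Lemma~3.1]{wc-nhx-2022}.

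\textbf{Main obstacle.} The substitutions in the first two steps are mechanical. The real work is the reduction in the third step: one must pin down exactly which summands are killed by $\Phi_n(q)$, re-index the reversed sum without off-by-one errors, and verify that collapsing the $q$-Pochhammer quotients modulo $\Phi_n(q)$ delivers precisely the factor $q^{1-2m}$, hence the stated exponent $\tfrac{n^2-1}{4}-m^2-m+1$ in the second congruence.
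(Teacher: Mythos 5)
Your proposal is correct and follows essentially the same route the paper sketches for this corollary: evenness upgrades (\ref{app-th1-equ}) to a congruence modulo $\Phi_n(q^2)$, the substitutions $q^2\to q$ and $q\to q^{-1}$ (whose exponent bookkeeping you carry out correctly, the summand exponent collapsing to $k$) give the first congruence, and the second follows from the relation between the $q^{2k}$- and $q^k$-sums. Your auxiliary congruence is exactly the $\bmod\ \Phi_n(q)$ reduction of the relation the paper itself quotes from \cite[(2.11)]{wxx-2022} in the analogous $\Phi_n(q)^2$ argument, so the remaining reversal bookkeeping you flag is the standard mechanism of the cited lemmas and poses no obstacle.
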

Let $q\rightarrow 1$ in the above two congruences. We can find the following corollary.
\begin{corollary}
  Let $p$ be an odd prime. Then
  \begin{align}
    \sum_{k=0}^{p-m}\frac{2^k(m-\frac{1}{2})_k}{k!}\equiv (-1)^{\frac{p\!-\!1}{2}\!+\!m\!-\!1}\mod p.
  \end{align}
\end{corollary}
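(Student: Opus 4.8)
The plan is to obtain this corollary by letting $q\to1$ in either of the two congruences of Corollary~\ref{app-th1-equ-cor}, specialised to $n=p$ an odd prime (and keeping $1\le m\le\frac{p+1}{2}$, the range inherited from that corollary). The mechanism is that $\Phi_p(1)=1+1+\cdots+1=p$, so that a congruence modulo $\Phi_p(q)$ between rational functions that are regular at $q=1$ descends, upon evaluation at $q=1$, to a congruence modulo $p$ --- once one verifies that the quotient by $\Phi_p(q)$ is $p$-integral at $q=1$. Write
\[
  A(q)=\sum_{k=0}^{p-m}\frac{(q^{2m-1};q^2)_k}{(q;q)_k}q^k,\qquad
  B(q)=(-1)^{\frac{p-1}{2}+m-1}q^{\frac{p^2-1}{4}-m^2+m},
\]
so that Corollary~\ref{app-th1-equ-cor} reads $A(q)\equiv B(q)\pmod{\Phi_p(q)}$.

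The first step is to compute the two limits. From $1-q^{N}=(1-q)(1+q+\cdots+q^{N-1})$ one has $\lim_{q\to1}\frac{1-q^{N}}{1-q}=N$, whence
\[
  \lim_{q\to1}\frac{(q^{2m-1};q^2)_k}{(q;q)_k}
  =\frac{\prod_{i=0}^{k-1}(2m-1+2i)}{k!}
  =\frac{2^{k}\,(m-\frac12)_k}{k!},
\]
and, since $q^{k}\to1$, we get $A(1)=\sum_{k=0}^{p-m}\frac{2^{k}(m-\frac12)_k}{k!}$; on the right the power of $q$ tends to $1$, so $B(1)=(-1)^{\frac{p-1}{2}+m-1}$. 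Note also that each summand of $A(1)$ lies in $\mathbb{Z}_{(p)}$, since $\prod_{i=0}^{k-1}(2m-1+2i)\in\mathbb{Z}$ while $k!$ is prime to $p$ for $k\le p-m<p$.

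The second step is to justify the passage to $q=1$. Because $\Phi_p(q)\mid 1-q^{j}$ only when $p\mid j$, the polynomial $(q;q)_{p-m}$ is prime to $\Phi_p(q)$, so the congruence is equivalent to a polynomial identity $(q;q)_{p-m}\bigl(A(q)-B(q)\bigr)=\Phi_p(q)\,D(q)$ with $D\in\mathbb{Z}[q]$. Each ratio $\frac{(q^{2m-1};q^2)_k}{(q;q)_k}$ is regular at $q=1$ (numerator and denominator vanish there to order exactly $k$), hence so is $A(q)-B(q)$; comparing orders of vanishing at $q=1$ in that identity forces $(q-1)^{p-m}\mid D(q)$. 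Writing $(q;q)_{p-m}=(q-1)^{p-m}g(q)$ with $g(1)=(-1)^{p-m}(p-m)!$ prime to $p$, and $D(q)=(q-1)^{p-m}h(q)$ with $h\in\mathbb{Z}[q]$, cancelling $(q-1)^{p-m}$ and putting $q=1$ gives $A(1)-B(1)=\Phi_p(1)\,h(1)/g(1)\in p\,\mathbb{Z}_{(p)}$. Combining this with the limits of the first step yields exactly
\[
  \sum_{k=0}^{p-m}\frac{2^{k}(m-\frac12)_k}{k!}\equiv(-1)^{\frac{p-1}{2}+m-1}\pmod p .
\]
The delicate point is precisely this last step --- checking the $p$-integrality of $\bigl(A(q)-B(q)\bigr)/\Phi_p(q)$ at $q=1$ through the order-of-vanishing count; the remainder is a routine limit computation, and starting instead from the second congruence of Corollary~\ref{app-th1-equ-cor} gives the same conclusion.
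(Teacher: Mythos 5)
Your proposal is correct and follows the same route as the paper, which simply instructs the reader to let $q\to1$ in the two congruences of Corollary~\ref{app-th1-equ-cor}; your limit computation $\lim_{q\to1}(q^{2m-1};q^2)_k/(q;q)_k=2^k(m-\tfrac12)_k/k!$ together with $\Phi_p(1)=p$ is exactly the intended specialisation. The only difference is that you supply the $p$-integrality/order-of-vanishing justification for descending from $\bmod\,\Phi_p(q)$ to $\bmod\,p$, which the paper leaves implicit.
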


Particularly, when $m=1$, the mentioned congruence corresponds to the following congruence, as proven by Sun and Tauraso \cite{s-t-2010}.
\begin{align}
  \sum_{k=0}^{p^r-1}\frac{1}{2^k}\binom{2k}{k}\equiv (-1)^{\frac{n\!-\!1}{2}}\mod p.
\end{align}

In 2022,  Wang and Yu  \cite{wxx-2022} proved the following $q$-congruences,
$$\sum_{k=0}^{n-1}\frac{(q^{4d\!+\!2};q^4)_k}{(q^2;q^2)_k}q^{\!-\!4dk\!-\!k^2\!-\!k}
  \equiv (\!-\!1)^{\frac{n-1}{2}+d}q^{-\frac{n^2-1}{2}+2d^2+2d} \mod\Phi_n(q),$$

  Subsequently, Wang and Yu proved that the aforementioned $q$-congruence holds modulo $\Phi_n(q)^2$, as follows.

\begin{align}
  \sum_{k=0}^{n-1}  \frac{(q^{2d\!+\!1};q^2)_k}{(q;q)_k}q^{k}
    &\equiv (\!-\!1)^{\frac{n-1}{2}\!+\!d}q^{\frac{n^2\!-\!(2d\!+\!1)^2}{4}}\!+\!\mathbf{Sgn}(d)\sum_{t=1}^{|d|}\frac{(\!-\!1)^{d\!-\!t}(1\!+\!q^{2t\!-\!1})[n]}{q^{2\binom{d\!+\!1}{2}\!-\!2\binom{t}{2}}[2t\!-\!1]},\label{wang-yu-1}\\
    \sum_{k=0}^{n-1}\frac{(q^{2d\!+\!1};q^2)_k}{(q;q)_k}q^{2k}
    &\equiv (\!-\!1)^{\frac{n-1}{2}\!+\!d}q^{\frac{n^2\!-\!(2d+1)(2d+5)}{4}}
    \!-\!\frac{1\!-\!q}{q^{2d\!+\!1}}[n]\notag\\
    &\qquad\qquad\qquad\qquad+\mathbf{Sgn}(d)\sum_{t=1}^{|d|}\frac{(\!-\!1)^{d\!-\!t}(1\!+\!q^{2t\!-\!1})[n]}{q^{d^2\!+\!3d\!+\!1\!-\!2\binom{t}{2}}[2t\!-\!1]},\label{wang-yu-2}
\end{align}
where $[n]$ is the $q$-integer defined as $[n]=\frac{1-q^n}{1-q}$, and 
$\mathbf{Sgn}(d)$ is the Sign function defined as
$$ \mathbf{Sgn}(d)=\begin{cases}
  1,  & d>0; \\
  0,  & d=0; \\
  -1, & d<0.
\end{cases}$$
By using (\ref{app-lemma-equ-1}), we obtain the following theorem, which provides the equivalent forms of (\ref{wang-yu-1}) and (\ref{wang-yu-2}).
\begin{theorem}\label{app-th2}
  Let $ n$  be a positive odd integer, $ d \in\{-\frac{n-3}{2}, -\frac{n-5}{2},
    \dots, \frac{n-5}{2}, \frac{n-3}{2}\}$.  Then
  \begin{align}\label{app-th2-equ}
    \sum_{k=0}^{n-1} & \frac{(q^{4d\!+\!2};q^4)_k}{(q^2;q^2)_k}q^{\!-\!4dk\!-\!k^2\!-\!k}
    \equiv (\!-\!1)^{\frac{n-1}{2}\!+\!d}q^{-\frac{n^2-1}{2}+2d^2+2d}\!+\!\mathbf{Sgn}(d)
    (1\!-\!q^n)q^{2d^2\!+\!3d}\times\notag      \\
          & \sum_{j=1}^{|d|}\frac{q^{j\!-\!2dj}(1\!+\!q^{(4j\!-\!2)d})}{1\!-\!q^{4j\!-\!2}}
    \{1\!+\!(\!-\!1)^{j\!+\!d}\!+\!q^{2j\!-\!1}[(\!-\!1)^{j\!+\!d}\!-\!1]\}\mod\Phi_n(q)^2.
  \end{align}
\end{theorem}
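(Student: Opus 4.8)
The plan is to derive Theorem~\ref{app-th2} as a specialization of the equivalent form~(\ref{app-lemma-equ-1}) of Theorem~\ref{app-lemma-1}, following the route already used to obtain Theorem~\ref{app-th1}. First I would set $q\to q^2$ throughout~(\ref{app-lemma-equ-1}) so that the $q$-congruence lands in the $\Phi_n(q)^2$ world we want, and then choose the parameters $a$, $b$, $m$ to match the left-hand side of~(\ref{app-th2-equ}). Comparing $\sum_{k}(q^{4d+2};q^4)_k/(q^2;q^2)_k\,q^{-4dk-k^2-k}$ with the left side of~(\ref{app-lemma-equ-1}) after the substitution suggests taking $b$ (and the product $ab$) so that $(a,b;q^4)_k$ collapses to a single shifted factorial $(q^{4d+2};q^4)_k$ — concretely letting one of $a,b$ be $0$ and the other $q^{4d+2}$ — and picking $m$ in terms of $d$ and $n$ (roughly $m=\frac{n-1}{2}-d+1$ or a similar affine expression) so that the factor $(q^{2n-2k};q^2)_{m-1}$ and the power of $q$ reproduce the exponent $-4dk-k^2-k$. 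The summation range $0\le k\le n-1$ on the left of~(\ref{app-lemma-equ-1}) already matches, which is the reason Theorem~\ref{app-lemma-1} rather than~(\ref{carlitz-m}) is the right starting point.

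Next I would evaluate the right-hand side of the specialized~(\ref{app-lemma-equ-1}) modulo $\Phi_n(q)^2$. The prefactor is a product of $q$-binomial coefficients, a power of $q$, a sign, and the ratio $(q^2;q^2)_{m-1}(a;q^2)_{n+m-1}/(q;q^2)_n$; using the standard evaluations $\begin{bmatrix}2n-1\\n-1\end{bmatrix}_q\equiv$ (a power of $q$) and $\begin{bmatrix}n-1\\(n-1)/2\end{bmatrix}_{q^2}(-q;q)_{n-1}^{-2}\equiv$ (a power of $q$) modulo $\Phi_n(q)$ — exactly the identities from Guo~\cite{GJW-2018} and Pan~\cite{Ljx-Ph-Zy-2015,Panhao-2007} invoked in the paper — the whole prefactor reduces to an explicit sign times a power of $q$ times $(1-q^n)$ up to units, where the factor $(1-q^n)=[n](1-q)$ is the source of the $\mod\Phi_n(q)^2$ refinement. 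Then the inner bilateral-type sum $\sum_{k=-(n-1)/2}^{(n-1)/2}\frac{(q^{1-n};q^2)_k}{(q^{1+n};q^2)_k}\frac{b^{k+(n-1)/2}q^{2nk-2k}}{(aq^{n-2k-1};q^2)_m}$ must be analyzed: modulo $\Phi_n(q)$ most terms involving $(aq^{n-2k-1};q^2)_m$ in the denominator either vanish or survive only for a bounded set of $k$ (those $j$ with $1\le j\le|d|$), producing the finite sum $\sum_{j=1}^{|d|}\frac{q^{j-2dj}(1+q^{(4j-2)d})}{1-q^{4j-2}}\{1+(-1)^{j+d}+q^{2j-1}[(-1)^{j+d}-1]\}$ after collecting the $\pm k$ pairs and using $q^n\equiv1$; the leading term $k=-\frac{n-1}{2}$ (or the $j=0$ contribution) gives the main $(-1)^{(n-1)/2+d}q^{-(n^2-1)/2+2d^2+2d}$ piece.

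The main obstacle I expect is the bookkeeping in the inner sum: one has to show that, modulo $\Phi_n(q)$, the ratio $(q^{1-n};q^2)_k/(q^{1+n};q^2)_k$ is well-behaved (it is a ratio where numerator and denominator factors are congruent since $q^{1-n}\equiv q^{1+n}$), that the denominator $(aq^{n-2k-1};q^2)_m$ with the chosen $a=q^{4d+2}$ hits a zero factor $\equiv(1-q^{n}\cdot\text{unit})$ precisely when $k$ lies in a range governed by $d$, and then to separate the "regular" contribution (which combines with the prefactor's $[n]$ to something $\not\equiv0$) from the genuinely $[n]$-divisible tail. Matching the exact exponents and the combinatorial coefficient $\{1+(-1)^{j+d}+q^{2j-1}[(-1)^{j+d}-1]\}$ — which encodes a parity split between $j+d$ even and odd — will require careful tracking of $q^{\binom{k}{2}}$-type exponents through the $k\mapsto -k$ pairing and through the collapse $(q^{4d+2};q^4)$-factorials; the parenthetical remark after Theorem~\ref{app-th1} that the sum is an even function in $q$, together with $\Phi_n(q^2)=\Phi_n(q)\Phi_n(-q)$, can be used as a consistency check but not as a shortcut. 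Once the specialization is pinned down, the remaining steps are routine simplification, and the congruence~(\ref{app-th2-equ}) follows; its equivalence with Wang--Yu's~(\ref{wang-yu-1}) and~(\ref{wang-yu-2}) is then obtained by the same $q^2\to q$, $q\to q^{-1}$ manipulations referenced for Corollary~\ref{app-th1-equ-cor}.
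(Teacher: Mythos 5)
Your high-level strategy (specialize the equivalent form \eqref{app-lemma-equ-1}, reduce the prefactor with Guo's and Pan's congruences, and split the bilateral sum into a main term plus an $[n]$-divisible tail) matches the paper's, but the specialization you propose does not work, and this is the crux of the proof. If you set one of $a,b$ equal to $0$, the factor $(-ab)^{n-k-1}$ annihilates every term of the left-hand side of \eqref{app-lemma-equ-1} except $k=n-1$, so you cannot recover the sum $\sum_{k=0}^{n-1}(q^{4d+2};q^4)_k/(q^2;q^2)_k\,q^{-4dk-k^2-k}$ this way. Likewise, choosing $m$ as an affine function of $n$ and $d$ cannot make $(q^{2n-2k};q^2)_{m-1}$, which is a \emph{product} of binomial-type factors $(1-q^{2n-2k+2i})$, reproduce the monomial $q^{-4dk}$; at best it matches modulo $\Phi_n(q)$, which is useless for a $\bmod\,\Phi_n(q)^2$ statement. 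The correct choice is $m=1$ (so that factor is trivially $1$) together with $a=q^{2d+1}$, $b=-q^{2d+1}$: then $(a;q^2)_k(b;q^2)_k=(q^{4d+2};q^4)_k$ and $(-ab)^{n-k-1}=q^{(4d+2)(n-k-1)}$ supplies exactly the missing $q^{-4dk-2k}$ (up to a $k$-independent power of $q$ absorbed on the right). Also note \eqref{app-lemma-equ-1} is already written in base $q^2$, so your preliminary substitution $q\to q^2$ is not needed.

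A second, smaller inaccuracy: with these parameters the denominator in the bilateral sum is $1-q^{n+2d-2k}$, so the main term comes from $k=d$ (where this factor equals $1-q^n$ and cancels the prefactor $(1-q^n)$), not from $k=-\frac{n-1}{2}$. The remaining terms form $\Psi_{n,d}(q)=\sum_{k\neq d}(-1)^kq^{2dk-k}/(1-q^{2d-2k})$, whose evaluation modulo $\Phi_n(q)$ (by pairing $k\leftrightarrow -k$ and splitting off the ranges $\frac{n-1}{2}-|d|<|k|\le\frac{n-1}{2}$) is a separate lemma carrying most of the computational weight, along with the identity $\frac{(q^{2n+1};q^2)_d}{(q;q^2)_d}\frac{(q^{1-n};q^2)_d}{(q^{1+n};q^2)_d}\equiv1\bmod\Phi_n(q)^2$ needed to normalize the $k=d$ term. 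Your proposal gestures at these steps but does not supply them, and without the correct specialization the computation cannot even be started.
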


Obviously, the left-hand side of (\ref{app-th2-equ}) is an even function in $q$. Using a similar approach to Corollary \ref{app-th1-equ-cor}, by letting $q^2\to q$ and then letting $q\to q^{-1}$ in (\ref{app-th2-equ}), we can derive the following corollary, which is the equivalent form of (\ref{wang-yu-1}).

\begin{corollary}
  Let $ n$  be a positive odd integer, $ d \in\{-\frac{n-3}{2}, -\frac{n-5}{2},
  \dots, \frac{n-5}{2}, \frac{n-3}{2}\}$.  Then
  \begin{align}\label{app-th2-equ-q-k}
    \sum_{k=0}^{n-1} & \frac{(q^{2d\!+\!1};q^2)_k}{(q;q)_k}q^{k}
    \equiv (\!-\!1)^{\frac{n-1}{2}\!+\!d}q^{\frac{n^2-1}{4}-d^2-d}\!+\!\mathbf{Sgn}(d)
    (1\!-\!q^n)\times\notag \\
          & \sum_{j=1}^{|d|}\frac{q^{jd\!+\!2j\!-\!d^2\!-\!1-\!\frac{j\!+\!3d}{2}}(1\!+\!q^{-(2j\!-\!1)d})}{1\!-\!q^{2j\!-\!1}}
    \frac{\{1\!+\!(\!-\!1)^{j\!+\!d}\!+\!q^{\frac{1}{2}\!-\!j}[(\!-\!1)^{j\!+\!d}\!-\!1]\}}{2} \mod\Phi_n(q)^2.
  \end{align}
\end{corollary}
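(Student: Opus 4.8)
The plan is to obtain this corollary from Theorem~\ref{app-th2} in exactly the way Corollary~\ref{app-th1-equ-cor} was obtained from Theorem~\ref{app-th1}: first I would recast $(\ref{app-th2-equ})$, modulo $\Phi_n(q)^2$, as a congruence between quantities involving only \emph{even} powers of $q$, then perform the substitution $q^2\to q$, and finally substitute $q\to q^{-1}$ and simplify.

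For the first stage, note that the left side of $(\ref{app-th2-equ})$ already involves only even powers of $q$: the symbols $(q^{4d+2};q^4)_k$, $(q^2;q^2)_k$ contain only even powers, and $-4dk-k^2-k=-4dk-k(k+1)$ is even. On the right side the term $(-1)^{(n-1)/2+d}q^{-(n^2-1)/2+2d^2+2d}$ is an even power of $q$ since $n$ is odd. For the $\mathbf{Sgn}(d)$-sum I would first split the bracket $\{1+(-1)^{j+d}+q^{2j-1}[(-1)^{j+d}-1]\}$, which equals $2$ when $j+d$ is even and $-2q^{2j-1}$ when $j+d$ is odd; a short parity count then shows $q^{2d^2+3d}$ times each resulting summand involves only even powers of $q$, and the denominators $1-q^{4j-2}$ are coprime to $\Phi_n(q)$ because $1\le 2j-1\le n-4<n$. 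The only remaining odd factor is $1-q^{n}$; but $\Phi_n(q)$ divides $1-q^{n}$ and $(1+q^{n})^{-1}\equiv\tfrac12\pmod{\Phi_n(q)}$, so modulo $\Phi_n(q)^2$ I may replace $1-q^{n}$ by $\tfrac12(1-q^{2n})$. After clearing the (even, $\Phi_n$-coprime) denominators $1-q^{4j-2}$, this turns $(\ref{app-th2-equ})$ into a congruence $L(q^2)\equiv M(q^2)\pmod{\Phi_n(q)^2}$ between Laurent polynomials $L,M$ in $q^2$.

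Because $n$ is odd, the map $q\mapsto q^2$ permutes the primitive $n$-th roots of unity, so $\Phi_n(q)^2\mid L(q^2)-M(q^2)$ forces $L(\eta)=M(\eta)$ and $L'(\eta)=M'(\eta)$ at every primitive $n$-th root $\eta$, hence $L(q)\equiv M(q)\pmod{\Phi_n(q)^2}$ in the halved variable; the surviving factor is now $1-q^{n}$, as in the statement. For the last stage I would use that $\Phi_n$ is self-reciprocal (so $q\to q^{-1}$ preserves the congruence up to a unit of the Laurent ring) together with the reflection rule $(a;p^{-1})_k=(a^{-1};p)_k(-a)^kp^{-\binom{k}{2}}$ applied to $(q^{-(2d+1)};q^{-2})_k$ and $(q^{-1};q^{-1})_k$: the $(-1)^k$ and monomial prefactors cancel and the left side collapses to exactly $\sum_{k=0}^{n-1}\frac{(q^{2d+1};q^2)_k}{(q;q)_k}q^{k}$. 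On the right, the first term becomes $(-1)^{(n-1)/2+d}q^{(n^2-1)/4-d^2-d}$, and pushing the exponents through $q^2\to q$ and then $q\to q^{-1}$ in each summand, with the $j+d$ even/odd cases already isolated, produces the stated factors $q^{jd+2j-d^2-1-(j+3d)/2}$, $1+q^{-(2j-1)d}$, $1-q^{2j-1}$ and $q^{1/2-j}$, together with the $\tfrac12$ inherited from $(1+q^{n})^{-1}\equiv\tfrac12$.

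I expect the main obstacle to be the passage in the third step: verifying carefully that after the bracket split and the replacement $1-q^{n}\mapsto\tfrac12(1-q^{2n})$ \emph{both} sides of $(\ref{app-th2-equ})$ genuinely reduce, modulo $\Phi_n(q)^2$, to polynomials in $q^2$ (once denominators are cleared) — this is exactly what licenses the substitution $q^2\to q$ — and that the half-integer exponents produced by $q^2\to q$, namely the $q^{1/2-j}$ and $q^{-(j+3d)/2}$, recombine to integer powers in each parity case. Once these points are settled the remaining exponent accounting is routine; and it is then immediate, reducing modulo $\Phi_n(q)^2$ with $q^{n}\equiv1$, that the displayed right-hand side coincides with that of Wang and Yu's $(\ref{wang-yu-1})$, which is the asserted equivalence.
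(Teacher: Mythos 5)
Your proposal is correct and follows essentially the same route as the paper, which itself only sketches this derivation: verify that both sides of (\ref{app-th2-equ}) are even in $q$ (after replacing $1-q^n$ by $\tfrac12(1-q^{2n})$ via (\ref{equ-liujicai})), substitute $q^2\to q$, and then let $q\to q^{-1}$. Your root-multiplicity justification of the $q^2\to q$ step is just a more explicit version of the paper's appeal to $\Phi_n(q^2)=\Phi_n(q)\Phi_n(-q)$, and the parity and non-divisibility checks you supply are exactly the details the paper leaves implicit.
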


Similarly, using \cite[(2.11)]{wxx-2022} and Lemma \ref{app-th2-lamma2}, we arrive at the following congruence:
\begin{align}
  \sum_{k=0}^{n-1}\frac{(q^{2d\!+\!1};q^2)_k}{(q;q)_k}q^{2k}\equiv
  q^{-2d-1}\sum_{k=0}^{n-1}\frac{(q^{2d\!+\!1};q^2)_k}{(q;q)_k}q^{k}-q^{-2d-1}(1-q^n) \mod\Phi_n(q)^2,
\end{align}
Substituting (\ref{app-th2-equ-q-k}) into the above $q$-congruence, we obtain the following corollary, which is the equivalent form of (\ref{wang-yu-2}).
\begin{corollary}
  Let $ n$  be a positive odd integer, $ d \in\{-\frac{n-3}{2}, -\frac{n-5}{2},
  \dots, \frac{n-5}{2}, \frac{n-3}{2}\}$.  Then
  \begin{align}\label{app-th2-equ-q-2k}
    \sum_{k=0}^{n-1} & \frac{(q^{2d\!+\!1};q^2)_k}{(q;q)_k}q^{2k}
    \equiv (\!-\!1)^{\frac{n-1}{2}\!+\!d}q^{\frac{n^2\!-\!1}{4}\!-\!d^2\!-\!3d\!-\!1}
    \!-\!q^{\!-\!2d\!-\!1}(1-q^n)+\mathbf{Sgn}(d)
    (1\!-\!q^n)\times\notag \\
          & \sum_{j=1}^{|d|}\frac{q^{jd\!+\!2j\!-\!d^2\!-\!2-\!\frac{j\!+\!7d}{2}}(1\!+\!q^{-(2j\!-\!1)d})}{1\!-\!q^{2j\!-\!1}}
    \frac{\{1\!+\!(\!-\!1)^{j\!+\!d}\!+\!q^{\frac{1}{2}\!-\!j}[(\!-\!1)^{j\!+\!d}\!-\!1]\}}{2} \mod\Phi_n(q)^2.
  \end{align}
\end{corollary}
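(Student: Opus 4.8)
The plan is to derive (\ref{app-th2-equ-q-2k}) purely algebraically from two ingredients already established in the excerpt: the linking congruence
\begin{align*}
  \sum_{k=0}^{n-1}\frac{(q^{2d+1};q^2)_k}{(q;q)_k}q^{2k}\equiv
  q^{-2d-1}\sum_{k=0}^{n-1}\frac{(q^{2d+1};q^2)_k}{(q;q)_k}q^{k}-q^{-2d-1}(1-q^n)\pmod{\Phi_n(q)^2},
\end{align*}
which was obtained above from \cite[(2.11)]{wxx-2022} and Lemma \ref{app-th2-lamma2}, together with the evaluation of the $q^k$-weighted truncated sum modulo $\Phi_n(q)^2$ recorded in (\ref{app-th2-equ-q-k}). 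The only preliminary observation required is that $q$ is invertible in $\mathbb{Z}[q]/(\Phi_n(q)^2)$: the constant term of $\Phi_n$ is $\pm1$, hence $\gcd(q,\Phi_n(q))=1$, so $q^{-2d-1}$ is a well-defined element of that ring for every admissible $d$, and multiplying a congruence modulo $\Phi_n(q)^2$ by $q^{-2d-1}$ is legitimate.

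Concretely, I would substitute the right-hand side of (\ref{app-th2-equ-q-k}) for $\sum_{k=0}^{n-1}\frac{(q^{2d+1};q^2)_k}{(q;q)_k}q^{k}$ in the linking congruence and then distribute the factor $q^{-2d-1}$ over the resulting pieces. The leading monomial $(-1)^{\frac{n-1}{2}+d}q^{\frac{n^2-1}{4}-d^2-d}$ acquires the exponent $\frac{n^2-1}{4}-d^2-d-(2d+1)=\frac{n^2-1}{4}-d^2-3d-1$, which is exactly the leading term claimed in (\ref{app-th2-equ-q-2k}), and the sign $(-1)^{\frac{n-1}{2}+d}$ is unchanged. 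In the $\mathbf{Sgn}(d)$-weighted $j$-sum, a typical summand of (\ref{app-th2-equ-q-k}) carries $q^{\,jd+2j-d^2-1-\frac{j+3d}{2}}$, and multiplication by $q^{-2d-1}$ turns this exponent into $jd+2j-d^2-1-\frac{j+3d}{2}-(2d+1)=jd+2j-d^2-2-\frac{j+7d}{2}$, matching (\ref{app-th2-equ-q-2k}) term by term, while the rational factors $\frac{1+q^{-(2j-1)d}}{1-q^{2j-1}}$ and $\frac{1}{2}\{1+(-1)^{j+d}+q^{\frac{1}{2}-j}[(-1)^{j+d}-1]\}$ and the overall prefactor $(1-q^n)$ are carried over verbatim. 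Finally, the term $-q^{-2d-1}(1-q^n)$ already present in the linking congruence contributes the standalone summand $-q^{-2d-1}(1-q^n)$ of (\ref{app-th2-equ-q-2k}). Collecting the three contributions yields the stated congruence.

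There is no genuine obstacle in this last step — it is essentially bookkeeping of exponents — so the only point worth spelling out is the degenerate case $d=0$. There $\mathbf{Sgn}(d)=0$ and the $j$-sum is empty, and one checks directly that the linking congruence combined with the $d=0$ instance $\sum_{k=0}^{n-1}\frac{(q;q^2)_k}{(q;q)_k}q^{k}\equiv(-1)^{\frac{n-1}{2}}q^{\frac{n^2-1}{4}}$ of (\ref{app-th2-equ-q-k}) gives $(-1)^{\frac{n-1}{2}}q^{\frac{n^2-1}{4}-1}-q^{-1}(1-q^n)$, which is precisely the $d=0$ specialization of the right-hand side of (\ref{app-th2-equ-q-2k}). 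The substantive input — that the $q^{2k}$-sum is an explicit affine function of the $q^{k}$-sum modulo $\Phi_n(q)^2$ — has already been supplied by \cite[(2.11)]{wxx-2022} and Lemma \ref{app-th2-lamma2}, so nothing further is needed to complete the argument.
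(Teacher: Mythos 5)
Your proposal is correct and coincides with the paper's own argument: the corollary is obtained exactly by substituting (\ref{app-th2-equ-q-k}) into the linking congruence derived from \cite[(2.11)]{wxx-2022} and Lemma \ref{app-th2-lamma2}, then shifting all exponents by $-2d-1$, and your bookkeeping ($\frac{n^2-1}{4}-d^2-d\mapsto\frac{n^2-1}{4}-d^2-3d-1$ and $jd+2j-d^2-1-\frac{j+3d}{2}\mapsto jd+2j-d^2-2-\frac{j+7d}{2}$) checks out. The only difference is that you spell out the invertibility of $q$ modulo $\Phi_n(q)^2$ and the degenerate case $d=0$, which the paper leaves implicit.
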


Particularly, by setting $d=-1$ in both (\ref{app-th2-equ-q-k}) and
(\ref{app-th2-equ-q-2k}), we reach the Wang-Ni $q$-congruence as established
in \cite[Theorem 1.1]{wc-nhx-2022}.

\begin{align}
   & \sum_{k=0}^{n-1}\frac{(q^{\!-\!1};q^2)_k}{(q;q)_k}q^{k}
  \equiv (\!-\!1)^{\frac{n+1}{2}}q^{\frac{n^2\!-\!1}{4}}-(1+q)[n] \mod\Phi_n(q)^2, \\
   & \sum_{k=0}^{n-1}\frac{(q^{\!-\!1};q^2)_k}{(q;q)_k}q^{2k}
  \equiv (\!-\!1)^{\frac{n+1}{2}}q^{\frac{n^2\!+\!3}{4}}-2q[n] \mod\Phi_n(q)^2.
\end{align}
Similarly,  setting $ d=0$  in (\ref{app-th2-equ-q-2k}) and setting $ d=1$  in (\ref{app-th2-equ-q-k}) we
arrive at the Wang-Ni $q$-congruence \cite[Theorem 1.4]{wc-nhx-2022} as follows,
\begin{align}
   & \sum_{k=0}^{n-1}\frac{(q;q^2)_k}{(q;q)_k}q^{2k}
  \equiv (\!-\!1)^{\frac{n-1}{2}}q^{\frac{n^2\!-\!5}{4}}+\frac{q-1}{q}[n] \mod\Phi_n(q)^2, \\
   & \sum_{k=0}^{n-1}\frac{(q^{3};q^2)_k}{(q;q)_k}q^{k}
  \equiv (\!-\!1)^{\frac{n+1}{2}}q^{\frac{n^2\!-\!9}{4}}+\frac{1+q}{q^2}[n] \mod\Phi_n(q)^2. \label{gjw-con}
\end{align}
Here, (\ref{gjw-con}) refers to a conjecture put forth by Gu and
Guo \cite{gcy-gjw-2021}. By letting $q\rightarrow1$ in either
(\ref{app-th2-equ-q-k}) or (\ref{app-th2-equ-q-2k}), we derive the following congruence.
\begin{corollary}
  Let $d$ be an integer and $p$ an odd prime such that $p^r > 2|d| + 1$. Then
  \begin{align}
    \sum_{k=0}^{p^r-1}\frac{2^k(d+\frac{1}{2})_k}{k!}
    \equiv (-1)^{\frac{p^r\!-\!1}{2}\!+\!d}\!+\!
    \mathbf{Sgn}(d)\sum_{j=1}^{|d|}\frac{2p^r(\!-\!1)^{j\!+\!d}}{2j\!-\!1}\mod p^2.
  \end{align}
  Similarly, when $d=0$, the congruence mentioned above
  corresponds to the following congruence, as proven by Sun\cite{szw2010},
  \begin{align}
    \sum_{k=0}^{p^r-1}\frac{1}{2^k}\binom{2k}{k}\equiv (-1)^{\frac{n\!-\!1}{2}}\mod p^2.
  \end{align}
\end{corollary}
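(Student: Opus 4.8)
\emph{Proof strategy.} The plan is to obtain the congruence by letting $q\to 1$ in the corollary~(\ref{app-th2-equ-q-k}), with $n$ specialized to $p^r$ (any $r\ge 1$); one could equally start from~(\ref{app-th2-equ-q-2k}) or directly from~(\ref{app-th2-equ}) in Theorem~\ref{app-th2}. Under $n=p^r$ the admissible range $d\in\{-\tfrac{n-3}{2},\dots,\tfrac{n-3}{2}\}$ becomes exactly $p^r>2|d|+1$, since $p^r$ and $2|d|+1$ are both odd and hence $p^r\ge 2|d|+3$.

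First I would pass to the limit on the left-hand side of~(\ref{app-th2-equ-q-k}) term by term. From $\frac{1-q^{m}}{1-q}\to m$ as $q\to 1$, one gets
\[
\frac{(q^{2d+1};q^2)_k}{(q;q)_k}=\frac{\prod_{i=0}^{k-1}(1-q^{2d+1+2i})}{\prod_{i=1}^{k}(1-q^{i})}\longrightarrow \frac{\prod_{i=0}^{k-1}(2d+1+2i)}{k!}=\frac{2^{k}(d+\tfrac12)_k}{k!},
\]
and $q^{k}\to 1$, so the left-hand side tends to $\sum_{k=0}^{p^r-1}\frac{2^{k}(d+\frac12)_k}{k!}$, which is the left side of the claimed congruence.

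Next I would evaluate the limit of the right-hand side. The closed term $(-1)^{\frac{n-1}{2}+d}q^{\frac{n^2-1}{4}-d^2-d}$ tends to $(-1)^{\frac{p^r-1}{2}+d}$. Inside the sign-sum the factor $1-q^{n}$ vanishes at $q=1$, but distributing it over the sum and pairing it with each denominator gives $\frac{1-q^{n}}{1-q^{2j-1}}=\frac{[n]}{[2j-1]}\to\frac{p^r}{2j-1}$, while $1+q^{-(2j-1)d}\to 2$, the bracket $\frac{1+(-1)^{j+d}+q^{1/2-j}[(-1)^{j+d}-1]}{2}\to(-1)^{j+d}$, and every remaining power of $q$ tends to $1$; hence the sum tends to $\mathbf{Sgn}(d)\sum_{j=1}^{|d|}\frac{2p^r(-1)^{j+d}}{2j-1}$, the right side of the claim. (The same bookkeeping applied to~(\ref{app-th2-equ}) yields the identical two limits, with $\frac{1-q^{n}}{1-q^{4j-2}}\to\frac{p^r}{2(2j-1)}$ offset by $1+q^{(4j-2)d}\to 2$ and the last bracket $\to 2(-1)^{j+d}$.)

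Finally I would convert the congruence modulo $\Phi_n(q)^2$ into one modulo $p^2$, using the classical value $\Phi_{p^r}(1)=p$ together with the $p$-integrality of the quantities involved: each $\frac{2^{k}(d+\frac12)_k}{k!}=2^{k}\binom{d-\frac12+k}{k}$ lies in $\mathbb{Z}_p$ because $d-\frac12\in\mathbb{Z}_p$ ($p$ odd) and $\binom{x}{k}$ maps $\mathbb{Z}_p$ into $\mathbb{Z}_p$, while each $\frac{2p^r(-1)^{j+d}}{2j-1}$ lies in $p\mathbb{Z}_p$ because $v_p(2j-1)\le r-1$ whenever $2j-1<p^r$. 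I expect this specialization to be the \textbf{main obstacle}: the right-hand side is of indeterminate form $\tfrac00$ at $q=1$ (the zero $1-q^{n}$ against the denominators $1-q^{2j-1}$), so one cannot clear denominators naively; instead one must verify that the difference of the two sides equals $\Phi_n(q)^2$ times a rational function that is regular and $p$-integral at $q=1$, and only then set $q=1$. The hypothesis $p^r>2|d|+1$ is precisely what legitimizes this step, since it forces $\gcd(1-q^{2j-1},\Phi_{p^r}(q))=1$ and $v_p(2j-1)<r$ for $1\le j\le|d|$. For $d=0$ the sign-sum is empty and $\frac{2^{k}(\frac12)_k}{k!}=\frac{(2k)!}{2^{k}(k!)^2}=\frac1{2^{k}}\binom{2k}{k}$, recovering Sun's congruence $\sum_{k=0}^{p^r-1}2^{-k}\binom{2k}{k}\equiv(-1)^{\frac{p^r-1}{2}}\pmod{p^2}$.
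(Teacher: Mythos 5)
Your proposal is correct and follows essentially the same route as the paper, which simply states that the corollary is obtained "by letting $q\rightarrow 1$" in (\ref{app-th2-equ-q-k}) or (\ref{app-th2-equ-q-2k}). You supply more detail than the paper does — the term-by-term limits, the identification of $p^r>2|d|+1$ with the admissible range of $d$, and the $p$-integrality argument needed to pass from $\bmod\,\Phi_{p^r}(q)^2$ to $\bmod\,p^2$ via $\Phi_{p^r}(1)=p$ — all of which is accurate and fills a step the paper leaves implicit.
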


\section{The  proofs of Theorem\ref{Carlitz1974}-Theorem\ref{carlitz-general-2}}

\subsection{The new proof of  Theorem\ref{carlitz1974-them}}

\begin{proof}
  Let  \begin{align}\label{carlitz-l-phi}
    \phi_n(a, b)=\frac{(q;q)_n}{(a;q)_{n+1}}
    \sum_{k=0}^n\frac{(a, b;q)_k}{(q;q)_k}(-ab)^{n-k}q^{\binom{n}{2}-\binom{k}{2}},
  \end{align}
  and by
  \begin{align}
    \frac{(q;q)_n}{(q;q)_k}=
    \frac{(q;q)_{n-1}}{(q;q)_{k-1}}+\frac{(q;q)_{n-1}}{(q;q)_{k}}q^k(1-q^{n\!-\!k}), \notag
  \end{align}
  then, we have calculated that
  \begin{align}
    \phi_n(a, b) & \!=\!\frac{1}{(a;q)_{n\!+\!1}}
    \sum_{k=0}^n\left(\frac{(q;q)_{n\!-\!1}}{(q;q)_{k\!-\!1}}\!+\!\frac{(q;q)_{n\!-\!1}}{(q;q)_{k}}q^k(1\!-\!q^{n\!-\!k})\right)
    (a, b;q)_k(\!-\!ab)^{n-k}q^{\binom{n}{2}\!-\!\binom{k}{2}}\notag  \\
      & =\!\frac{1}{(a;q)_{n\!+\!1}}\sum_{k=0}^{n-1}\frac{(q;q)_{n\!-\!1}}{(q;q)_{k\!-\!1}}
    (a, b;q)_{k+1}(\!-\!ab)^{n-k-1}q^{\binom{n-1}{2}\!-\!\binom{k}{2}\!+\!n\!-\!1\!-\!k}\notag          \\
      & \ \ \ \ \ \ \ -\!\frac{1}{(a;q)_{n}}\sum_{k=0}^{n-1}
    \frac{(q;q)_{n\!-\!1}}{(q;q)_{k}}(a, b;q)_k\frac{abq^k-abq^n}{1-aq^n}(\!-\!ab)^{n\!-\!k\!-\!1}q^{\binom{n}{2}\!-\!\binom{k}{2}}\notag\\
      & =\!\frac{(q;q)_{n-1}}{(aq;q)_{n}}\sum_{k=0}^{n-1}\frac{(aq;q)_{k}(b;q)_{k+1}}{(q;q)_{k}}
    (\!-\!abq)^{n-k-1}q^{\binom{n-1}{2}\!-\!\binom{k}{2}}\notag       \\
      & \ \ \ \ \ \ \ -\!\frac{(q;q)_{n-1}}{(a;q)_{n}}\sum_{k=0}^{n-1}
    \frac{(a, b;q)_{k}}{(q;q)_{k}}\frac{(1\!-\!aq^n)b\!-\!b(1\!-\!aq^k)}{1-aq^n}(\!-\!ab)^{n\!-\!k\!-\!1}q^{\binom{n-1}{2}\!-\!\binom{k}{2}\!+\!n\!-\!1}\notag \\
      & =\!\frac{(q;q)_{n-1}}{(aq;q)_{n}}\sum_{k=0}^{n-1}\frac{(aq;q)_{k}(b;q)_{k+1}}{(q;q)_{k}}
    (\!-\!abq)^{n-k-1}q^{\binom{n-1}{2}\!-\!\binom{k}{2}}\notag       \\
      & \ \ \ \ \ \ \ -\!bq^{n-1}\frac{(q;q)_{n-1}}{(a;q)_{n}}\sum_{k=0}^{n-1}\frac{(a, b;q)_{k}}{(q;q)_{k}}
    (\!-\!ab)^{n-k-1}q^{\binom{n-1}{2}\!-\!\binom{k}{2}}\notag        \\
      & \ \ \ \ \ \ \ +\!\frac{(q;q)_{n-1}}{(aq;q)_{n}}\sum_{k=0}^{n-1}\frac{(aq, b;q)_{k}}{(q;q)_{k}}
    (\!-\!abq)^{n-k-1}q^{\binom{n-1}{2}\!-\!\binom{k}{2}}bq^k\notag   \\
      & =\!\frac{(q;q)_{n-1}}{(aq;q)_{n}}\sum_{k=0}^{n-1}\frac{(aq, b;q)_{k}}{(q;q)_{k}}
    (\!-\!abq)^{n-k-1}q^{\binom{n-1}{2}\!-\!\binom{k}{2}}(1-bq^k+bq^k)\notag     \\
      & \ \ \ \ \ \ \ -\!bq^{n-1}\frac{(q;q)_{n-1}}{(a;q)_{n}}\sum_{k=0}^{n-1}\frac{(a, b;q)_{k}}{(q;q)_{k}}
    (\!-\!abq)^{n-k-1}q^{\binom{n-1}{2}\!-\!\binom{k}{2}}\notag       \\
      & =\phi_{n-1}(aq, b)-bq^{n-1}\phi_{n-1}(a, b)\notag  \\
      & =(\eta_a-bq^{n-1})\phi_{n-1}(a, b), \notag
  \end{align}
  so
  $$\phi_n(a, b)=(\eta_a-bq^{n-1})\phi_{n-1}(a, b), $$
  repeated iteration, known from $q$-binomial theorem  \cite[Ex 1.2 (vi)]{gasper-book}
  \begin{align}
    \phi_n(a, b) & =(\eta_a-bq^{n-1})\phi_{n-1}(a, b)\notag        \\
      & =(\eta_a-bq^{n-1})(\eta_a-bq^{n-2})\phi_{n-2}(a, b)\notag  \\
      & =\dots\notag  \\
      & =(\eta_a-bq^{n-1})(\eta_a-bq^{n-2})\dots(\eta_a-b)\phi_{0}(a, b)\notag\\
      & =\sum_{k=0}^n\begin{bmatrix}
 n \\k
          \end{bmatrix}_q(-b)^kq^{\binom{k}{2}}\eta_a^{n-k}\phi_{0}(a, b)\label{proof-1-phi-expan},
  \end{align}
  substituting $ \phi_{0}(a, b)=\frac{1}{1-a}$  into the above equation yields
  \begin{align}\label{Carlitz1974_rhs}
    \phi_n(a, b)=\sum_{k=0}^n\begin{bmatrix}
      n \\k
    \end{bmatrix}_q\frac{(-b)^kq^{\binom{k}{2}}}{1-aq^{n-k}}.
  \end{align}
  Therefore,  equation (\ref{Carlitz1974}) holds.
\end{proof}

\subsection{The proof of Theorem \ref{carlitz-general-1}}
\ \

\ \

In 2022, Liu \cite{liuzhiguo-2023} presented
the expression of the Rogers-Szegö polynomial using the $q$-shift operator as follows:
$$ \sum_{k=0}^n\begin{bmatrix}
    n \\k
  \end{bmatrix}_qx^k=(x+\eta_x)^n\cdot1,$$
and the author also employed a similar approach to represent a class of
equations using the $q$-shift operator in \cite{ydk-2023}. Considering that
the polynomial in equation (\ref{Carlitz1974_rhs}) incorporates the $q$-binomial
coefficient $\begin{bmatrix}
    n \\k
  \end{bmatrix}_q$,  it leads us to consider that an operator represents $\phi_n(a, b)$,
Exploring the potential application of the $q$-Pascal rule
\cite{KVPC-book} could provide a way to establish this representation for $\phi_n(a, b)$
as an operator.

\begin{lemma}
  Let \begin{align*}
    \phi_n(a, b)=\sum_{k=0}^n\begin{bmatrix}
      n \\k
    \end{bmatrix}_q\frac{(-b)^kq^{\binom{k}{2}}}{1-aq^{n-k}}.
  \end{align*}
  Then
  \begin{equation}\label{carlitz-r-phi}
    \phi_n(a, b)=(\eta_a\eta_b-b\eta_b)^n\cdot\frac{1}{1-a}.
  \end{equation}
\end{lemma}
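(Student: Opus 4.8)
The plan is to prove the operator identity \eqref{carlitz-r-phi} by induction on $n$, using the $q$-Pascal rule to match the recursion satisfied by $\phi_n(a,b)$ against the recursion generated by applying the operator $T := \eta_a\eta_b - b\eta_b$. First I would record the action of the building blocks: $\eta_a\eta_b$ sends $f(a,b)$ to $f(aq,bq)$, and $b\eta_b$ sends $f(a,b)$ to $bf(a,bq)$, so $T$ is a well-defined operator on functions of $a$ and $b$ and $T^n$ makes sense. The base case $n=0$ is immediate since $\phi_0(a,b) = \frac{1}{1-a}$. For the inductive step I want to show $\phi_n(a,b) = T\,\phi_{n-1}(a,b)$, after which $\phi_n = T^n\cdot\frac{1}{1-a}$ follows by iterating and invoking the induction hypothesis $\phi_{n-1} = T^{n-1}\cdot\frac{1}{1-a}$.

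The heart of the argument is therefore the single-step recursion $\phi_n(a,b) = (\eta_a\eta_b - b\eta_b)\phi_{n-1}(a,b)$. To establish it I would start from the closed form $\phi_n(a,b) = \sum_{k=0}^n \begin{bmatrix} n \\ k\end{bmatrix}_q \frac{(-b)^k q^{\binom{k}{2}}}{1-aq^{n-k}}$ and apply the $q$-Pascal rule, which I would take in the form
\begin{align*}
  \begin{bmatrix} n \\ k\end{bmatrix}_q = \begin{bmatrix} n-1 \\ k-1\end{bmatrix}_q + q^k\begin{bmatrix} n-1 \\ k\end{bmatrix}_q.
\end{align*}
Substituting this splits $\phi_n$ into two sums. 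In the first sum I reindex $k \mapsto k+1$; the factor $(-b)^{k+1}q^{\binom{k+1}{2}} = (-b)(-b)^k q^{\binom{k}{2}} q^k$ together with the shifted denominator $\frac{1}{1-aq^{n-1-k}}$ should be recognizable as $-b$ times a term of $\phi_{n-1}$ evaluated with $b$ replaced by $bq$ — that is, a term of $(-b\eta_b)\phi_{n-1}(a,b)$ up to the contribution of the extra $q^k$ absorbed into $(-bq)^k$. In the second sum the factor $q^k$ combines with $(-b)^k q^{\binom{k}{2}}$ to give $(-bq)^k q^{\binom{k}{2}}$ and the denominator $\frac{1}{1-aq^{n-k}} = \frac{1}{1-(aq)q^{(n-1)-k}}$, which is exactly a term of $\phi_{n-1}(aq, bq) = (\eta_a\eta_b)\phi_{n-1}(a,b)$. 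The bookkeeping of the $q^k$ powers and the $-b$ prefactor is the step that must be done carefully, and it is the main obstacle: one has to be sure the exponent shifts $\binom{n}{2} \to \binom{n-1}{2}$ implicit in $\phi_{n-1}$ are not actually present here (since $\phi_n$ as defined in the lemma has no $q^{\binom{n}{2}}$ weight, unlike the left-hand side of \eqref{Carlitz1974}), so the matching is cleaner than in the proof of Theorem~\ref{carlitz1974-them} but still requires watching the $\binom{k+1}{2} = \binom{k}{2}+k$ identity and the off-by-one in the upper summation limit.

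Once the recursion $\phi_n(a,b) = (\eta_a\eta_b - b\eta_b)\phi_{n-1}(a,b)$ is in hand, the rest is formal: iterate $n$ times to get $\phi_n(a,b) = (\eta_a\eta_b - b\eta_b)^n \phi_0(a,b)$, and substitute $\phi_0(a,b) = \frac{1}{1-a}$. An alternative, perhaps cleaner, route that avoids the reindexing subtleties is to verify the identity directly by expanding $(\eta_a\eta_b - b\eta_b)^n\cdot\frac{1}{1-a}$: since $\eta_a\eta_b$ and $b\eta_b$ do not commute (one checks $(b\eta_b)(\eta_a\eta_b) = q\,(\eta_a\eta_b)(b\eta_b)$ as $q$-commuting operators), the $q$-binomial theorem for $q$-commuting variables applies and gives
\begin{align*}
  (\eta_a\eta_b - b\eta_b)^n = \sum_{k=0}^n \begin{bmatrix} n \\ k\end{bmatrix}_q q^{\binom{k}{2}} (\eta_a\eta_b)^{n-k}(-b\eta_b)^k,
\end{align*}
and then applying $(\eta_a\eta_b)^{n-k}(-b\eta_b)^k$ to $\frac{1}{1-a}$ yields $(-b)^k q^{\binom{k}{2}}\cdot\frac{1}{1-aq^{n-k}}$ after tracking the $b$-shifts — reproducing the closed form exactly. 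I would present whichever of these is shorter, but the $q$-commutation relation $(b\eta_b)(\eta_a\eta_b) = q(\eta_a\eta_b)(b\eta_b)$ is the key computational fact either way, and checking it (plus confirming the operator $q$-binomial theorem applies with that relation) is where I expect the real work to lie.
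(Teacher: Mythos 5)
Your primary argument --- establishing the one-step recursion $\phi_n(a,b)=(\eta_a\eta_b-b\eta_b)\phi_{n-1}(a,b)$ via the $q$-Pascal rule and then iterating down to $\phi_0(a,b)=\frac{1}{1-a}$ --- is exactly the paper's proof, and the alternative route you sketch via the operator $q$-binomial theorem is precisely the verification the paper records immediately after the lemma. One small correction to that secondary remark: the $q$-commutation relation runs the other way, $(\eta_a\eta_b)(b\eta_b)=q\,(b\eta_b)(\eta_a\eta_b)$ (apply both sides to $f(a,b)$ to see the factor $bq$ versus $b$), although the expansion $\sum_{k=0}^n\begin{bmatrix} n \\ k\end{bmatrix}_q q^{\binom{k}{2}}(\eta_a\eta_b)^{n-k}(-b\eta_b)^k$ you then display is the correct one.
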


\begin{proof}
  By $q$-Pascal rule \cite{KVPC-book},
  $$\begin{bmatrix}
      n\!+\!1 \\k
    \end{bmatrix}_q\!=\!\begin{bmatrix}
      n \\k
    \end{bmatrix}_qq^k\!+\!\begin{bmatrix}
      n \\k\!-\!1
    \end{bmatrix}_q,  $$
  we have
  \begin{align}
    \phi_n(a, b) & =\sum_{k=0}^n\begin{bmatrix}
 n \\k
          \end{bmatrix}_q\frac{(-b)^kq^{\binom{k}{2}}}{1-aq^{n-k}}\notag\\
      & =\sum_{k=0}^{n}\left(\!\begin{bmatrix}
  n-1 \\k
\end{bmatrix}_qq^k\!+\!\begin{bmatrix}
   n-1 \\k\!-\!1
 \end{bmatrix}_q\right)\frac{(-b)^kq^{\binom{k}{2}}}{1-aq^{n-k}}\notag    \\
      & =\sum_{k=0}^{n-1}\begin{bmatrix}
     n-1 \\k
   \end{bmatrix}_q\frac{(-bq)^kq^{\binom{k}{2}}}{1-aq^{n-k}}+\sum_{k=1}^{n}\begin{bmatrix}
          n-1 \\k-1
        \end{bmatrix}_q\frac{(-b)^kq^{\binom{k}{2}}}{1-aq^{n-k}}\notag \\
      & =\sum_{k=0}^{n-1}\begin{bmatrix}
     n-1 \\k
   \end{bmatrix}_q\frac{(-bq)^kq^{\binom{k}{2}}}{1-aq\cdot q^{n-k-1}}-
    b\sum_{k=0}^{n-1}\begin{bmatrix}
 n-1 \\k
          \end{bmatrix}_q\frac{(-bq)^kq^{\binom{k}{2}}}{1-aq^{n-k-1}}\notag       \\
      & =(\eta_a\eta_b-b\eta_b)\phi_{n-1}(a, b), \notag
  \end{align}
  through repeated iteration, we can derive
  \begin{align}
    \phi_n(a, b)=(\eta_a\eta_b-b\eta_b)^n\phi_{0}(a, b)=(\eta_a\eta_b-b\eta_b)^n\cdot\frac{1}{1-a}.\notag
  \end{align}

\end{proof}

Obviously,  since $\eta_a\eta_b\cdot b\eta_b=qb\eta_b\cdot \eta_a\eta_b$ ,
by  \cite[Ex 1.35]{gasper-book}
\begin{align}
  (\eta_a\eta_b-b\eta_b)^n\cdot\frac{1}{1-a} & =\sum_{k=0}^n\begin{bmatrix}
       n \\k
     \end{bmatrix}_q(-b\eta_b)^k\eta_a^{n-k}\eta_b^{n-k}\cdot\frac{1}{1-a}\notag         \\
 & =\sum_{k=0}^n\begin{bmatrix}
       n \\k
     \end{bmatrix}_q(-b)^kq^{\binom{k}{2}}\eta_a^{n-k}\eta_b^{n}\cdot\frac{1}{1-a}\notag \\
 & =\sum_{k=0}^n\begin{bmatrix}
       n \\k
     \end{bmatrix}_q\frac{(-b)^kq^{\binom{k}{2}}}{1-aq^{n-k}}.\notag
\end{align}
However, this approach does not yield the equation form (\ref{carlitz-l-phi}).
Our interest lies in understanding how equation (\ref{carlitz-l-phi}) is derived
without prior knowledge of its expression. Using equation (\ref{Carlitz1974_rhs})
to deduce equation (\ref{carlitz-l-phi}) is the question at hand. An operator
can directly represent equation (\ref{carlitz-l-phi}). By employing Sears' method
as outlined in \cite{sears-1951}, we establish the validity of
Theorem \ref{carlitz-general-1}, as follows.

\begin{proof}
  On the one hand,  since $ \eta_a\eta_b\cdot b\eta_b=qb\eta_b\cdot \eta_a\eta_b$ ,
  by  \cite[Ex 1.35]{gasper-book},  we have
  \begin{align}
    (\eta_a\eta_b-b\eta_b)^n\cdot f(a) & =\sum_{k=0}^n\begin{bmatrix}
 n \\k
          \end{bmatrix}_q(-b\eta_b)^k\eta_a^{n-k}\eta_b^{n-k}\cdot f(a)\notag         \\
      & =\sum_{k=0}^n\begin{bmatrix}
 n \\k
          \end{bmatrix}_q(-b)^kq^{\binom{k}{2}}\eta_a^{n-k}\eta_b^{n}\cdot f(a)\notag \\
      & =\sum_{k=0}^n\begin{bmatrix}
 n \\k
          \end{bmatrix}_q(-b)^kq^{\binom{k}{2}}f(aq^{n-k}), \notag
  \end{align}
  on the other hand,  by  \cite[(3.5)]{sears-1951} or \cite[Ex 1.3(i)]{gasper-book} we know
  \begin{align}
    (\eta_a\eta_b-b\eta_b)^n\cdot f(a) & =(\eta_a-b)(\eta_a-bq)\dots(\eta_a-bq^{n-1})\eta_b^{n}\cdot f(a)\notag      \\
      & =\sum_{k=0}^n\begin{bmatrix}
 n \\k
          \end{bmatrix}_qb^k(b)_{n-k}\eta_a^{n-k}\Delta_a^k f(a), \notag
  \end{align}
  where
  $ \Delta_a^k=(\eta_a-1)(\eta_a-q)\dots(\eta_a-q^{k-1}), $
  thus, (\ref{carlitz-cor-1}) holds. Next we prove (\ref{carlitz-cor-11}),
similar to the proof of Wang \cite{wmj2013}, we can rewrite \eqref{carlitz-cor-1} as
    \begin{align}
      \sum_{k=0}^n\begin{bmatrix}
         n \\k
       \end{bmatrix}_q(-1)^kq^{\binom{k}{2}}f(aq^{n-k})\frac{b^k}{(b;q)_{\infty}}\!=\!\sum_{k=0}^n\begin{bmatrix}
n \\k
         \end{bmatrix}_q\eta_a^{k}\Delta_a^{n-k}f(a)\frac{b^{n-k}}{(bq^k;q)_{\infty}}. \notag
    \end{align}
    Now letting $b=bt$ and then multiplying the above equation by $(qt/z,qt;q)_{\infty}$, we obtain
    \begin{align}
      \sum_{k=0}^n\begin{bmatrix}
         n \\k
       \end{bmatrix}_q(-b)^kq^{\binom{k}{2}}f(aq^{n-k})
      \frac{(qt/z,qt;q)_{\infty}t^k}{(bt;q)_{\infty}}\!=\!\sum_{k=0}^n\begin{bmatrix}
      n \\k
    \end{bmatrix}_qb^{n-k}\eta_a^{k}\Delta_a^{n-k}f(a)
      \frac{(qt/z,qt;q)_{\infty}t^{n-k}}{(btq^k;q)_{\infty}}. \notag
    \end{align}
    Next, taking the $q$-integral on both sides of the above identity concerning variable $t$,
    \begin{align}
       & \sum_{k=0}^n\begin{bmatrix}
 n \\k
          \end{bmatrix}_q(-b)^kq^{\binom{k}{2}}f(aq^{n-k})\int_z^1
      \frac{(qt/z,qt;q)_{\infty}t^k}{(bt;q)_{\infty}}d_qt\notag\\
       & \!=\!\sum_{k=0}^n\begin{bmatrix}
      n \\k
    \end{bmatrix}_qb^{n-k}\eta_a^{k}\Delta_a^{n-k}f(a)\int_z^1
      \frac{(qt/z,qt;q)_{\infty}t^{n-k}}{(btq^k;q)_{\infty}}d_qt. \label{proof-1.2-1}
    \end{align}
    by \cite[(3.11) and (3.12)]{wmj2013}, we know
    \begin{align}
      \int_x^1 \frac{(q t / x, q t ; q)_{\infty} t^{n-k}}{(b t q^k ; q)_{\infty}} d_q t=\frac{(1-q)(q, q / x, x ; q)_{\infty}}{(b q^k x, b q^k ; q)_{\infty}} \varphi_{n-k}^{(b q^k)}(x | q), \notag
      \end{align}
      and
    \begin{align}
      \int_x^1 \frac{(q t / x, q t ; q)_{\infty} t^k}{(b t ; q)_{\infty}} d_q t=\frac{(1-q)(q, q / x, x ; q)_{\infty}}{(b x, a ; q)_{\infty}} \varphi_k^{(b)}(x | q),\notag
    \end{align}
 substituting above two identities into (\ref{proof-1.2-1}) yields (\ref{carlitz-cor-11}). Finally, we use the general Andrews-Askey integral given
 by Wang \cite{wmj2008} to prove (\ref{carlitz-cor-111}). multiply (\ref{carlitz-cor-1}) by $\frac{(qb/s,qb/t;q)_{\infty}}{(ub,vb;q)_{\infty}}$ we
 can obtain
 \begin{align}
  \sum_{k=0}^n\begin{bmatrix}
    n \\k
  \end{bmatrix}_q&\frac{(qb/s,qb/t;q)_{\infty}(b;q)_{k}b^{n\!-\!k}}{(ub,vb;q)_{\infty}}\eta_a^{k}\Delta_a^{n\!-\!k} f(a)\notag\\
  &\!=\!\sum_{k=0}^n\begin{bmatrix}
n \\k
\end{bmatrix}_q(-1)^kq^{\binom{k}{2}}\frac{(qb/s,qb/t;q)_{\infty}b^k}{(ub,vb;q)_{\infty}}f(aq^{n-k}),\notag
 \end{align}
use the $q$-integral on both sides of the above equation with respect to variable $b$, we have
\begin{align}
  \sum_{k=0}^n\begin{bmatrix}
    n \\k
  \end{bmatrix}_q&\int_{s}^{t}\frac{(qb/s,qb/t;q)_{\infty}(b;q)_{k}b^{n\!-\!k}}{(ub,vb;q)_{\infty}}d_qb\cdot\eta_a^{k}\Delta_a^{n\!-\!k} f(a)\notag\\
  &\!=\!\sum_{k=0}^n\begin{bmatrix}
n \\k
\end{bmatrix}_q(-1)^kq^{\binom{k}{2}}\int_{s}^{t}\frac{(qb/s,qb/t;q)_{\infty}b^k}{(ub,vb;q)_{\infty}}d_qb\cdot f(aq^{n-k}),\label{proof-1.2-2}
\end{align}
by \cite[(4.13) and (4.31)]{wmj2008}, we know
\begin{align}
  \int_{s}^{t}\frac{(qb/s,qb/t;q)_{\infty}b^k}{(ub,vb;q)_{\infty}}d_qb\!=\!
  \frac{t(1\!-\!q)(q,tq/s,s/t,uvst;q)_{\infty}}{u^k(us,ut,vs,vt;q)_{\infty}}\ 
  _3\phi_2\left(\begin{gathered}
    vs,vt,q^{\!-\!k}\\
    0,uvst
  \end{gathered};q,q\right),\notag
\end{align}
and 
\begin{align}
  \int_{s}^{t}\frac{(qb/s,qb/t;q)_{\infty}(b;q)_{k}b^{n\!-\!k}}{(ub,vb;q)_{\infty}}&d_qb\!=\!\frac{t(1\!-\!q)(1/u;q)_k(q,tq/s,s/t,uvst;q)_{\infty}}{v^{n\!-\!k}(us,ut,vs,vt;q)_{\infty}}\notag\\
  &\times\sum_{i=0}^{k}\frac{(q^{\!-\!k},us,ut;q)_iq^i}{(q,uq^{\!-\!k\!+\!1},uvst;q)_i} \  _3\phi_2\left(\begin{gathered}
    vs,vt,q^{k\!-\!n}\\
    0,uvstq^i
  \end{gathered};q,q\right),\notag
\end{align}
substituting the two above equation into (\ref{proof-1.2-2}) gives (\ref{carlitz-cor-111}).
\end{proof}

\subsection{The proof of Theorem \ref{carlitz-general-2}}
\begin{proof}
  Let $f(a)=\frac{(ay;q)_{\infty}}{(ax;q)_{\infty}}$, then
  $$f(aq^{n-k})=f(a)\frac{(ax;q)_{n-k}}{(ay;q)_{n-k}},$$
   substitution into the right
  side of (\ref{carlitz-cor-1}),  we have
  \begin{align}\label{proof-carlitz-cor-2-l}
    \sum_{k=0}^n\begin{bmatrix}
       n \\k
     \end{bmatrix}_q(-b)^kq^{\binom{k}{2}}f(aq^{n-k}) & =\sum_{k=0}^n\begin{bmatrix}
    n \\k
  \end{bmatrix}_q(-b)^kq^{\binom{k}{2}}\frac{(ayq^{n-k};q)_{\infty}}{(axq^{n-k};q)_{\infty}}\notag   \\
         & =\frac{(ay;q)_{\infty}}{(ax;q)_{\infty}}\sum_{k=0}^n\begin{bmatrix}
         n \\k
       \end{bmatrix}_q\frac{(ax;q)_{n-k}}{(ay;q)_{n-k}}(-b)^kq^{\binom{k}{2}},
  \end{align}
  by induction,  we know
  $$ \Delta_a^nf(a)=\frac{(ayq^n;q)_{\infty}P_n(x, y)(-a)^n}{(ax;q)_{\infty}}q^{\binom{n}{2}}, $$
  substituting into the left side of (\ref{carlitz-cor-1}),  we deduce that
  \begin{align}
     & \sum_{k=0}^n\begin{bmatrix}
          n \\k
        \end{bmatrix}_qb^k(b;q)_{n\!-\!k}\eta_a^{n-k}\Delta_a^kf(a)\notag       \\
     & =\sum_{k=0}^n\begin{bmatrix}
n \\k
         \end{bmatrix}_qb^k(b;q)_{n\!-\!k}\eta_{a}^{n\!-\!k}\frac{(ayq^k;q)_{\infty}P_k(x, y)(-a)^k}{(ax;q)_{\infty}}q^{\binom{k}{2}}\notag       \\
     & =\sum_{k=0}^n\begin{bmatrix}
n \\k
         \end{bmatrix}_qb^k(b;q)_{n\!-\!k}\frac{(ayq^n;q)_{\infty}P_k(x, y)(-aq^{n-k})^k}{(axq^{n\!-\!k};q)_{\infty}}q^{\binom{k}{2}}\notag       \\
     & =\frac{(ay;q)_{\infty}}{(ax;q)_{\infty}(ay;q)_n}\sum_{k=0}^n\begin{bmatrix}
   n \\k
 \end{bmatrix}_q(ax, b;q)_{n-k}P_k(x, y)(-ab)^kq^{\binom{k}{2}+k(n-k)}\notag    \\
     & =\frac{(ay;q)_{\infty}}{(ax;q)_{\infty}(ay;q)_n}\sum_{k=0}^n\begin{bmatrix}
   n \\k
 \end{bmatrix}_q(ax, b;q)_{k}P_{n-k}(x, y)(-ab)^{n-k}q^{\binom{n}{2}-\binom{k}{2}}, \notag
  \end{align}
  hence
  \begin{align}
    \frac{(ay;q)_{\infty}}{(ax;q)_{\infty}} & \sum_{k=0}^n\begin{bmatrix}
     n \\k
   \end{bmatrix}_q\frac{(ax;q)_{n-k}}{(ay;q)_{n-k}}(-b)^kq^{\binom{k}{2}}\notag      \\
& =\frac{(ay;q)_{\infty}}{(ax;q)_{\infty}(ay;q)_n}\sum_{k=0}^n\begin{bmatrix}
        n \\k
      \end{bmatrix}_q(ax, b;q)_{k}P_{n-k}(x, y)(-ab)^{n-k}q^{\binom{n}{2}-\binom{k}{2}},\notag
  \end{align}
  multiply both sides by $ \frac{(ax;q)_{\infty}(ay;q)_n}{(ay;q)_{\infty}}$  yields
  (\ref{carlitz-cor-2}). Similarly, let $f(a)=\frac{(ay;q)_{\infty}}{(ax;q)_{\infty}}$ in (\ref{carlitz-cor-11}) and (\ref{carlitz-cor-111}) we can drive (\ref{carlitz-cor-22}) and (\ref{carlitz-cor-222}).
\end{proof}
\subsection{The alternative proof of Theorem \ref{carlitz1974-them}}
\begin{proof}
  Let $ x=1, y=q$  in (\ref{carlitz-cor-2}) we obtain (\ref{Carlitz1974}).
\end{proof}

\section{The  proofs of Theorem \ref{app-lemma-1}-Theorem \ref{app-th2}}
\subsection{The proof of Theorem \ref{app-lemma-1}}

\begin{proof}
  Let $q \rightarrow q^2$ and $n \rightarrow n-1$ in (\ref{carlitz-m}). Then, the left side of (\ref{carlitz-m}) becomes
  \begin{align}\label{app-equ-1-proof-1}
    LHS & =\sum_{k=0}^{n-1}\frac{(a, b;q^2)_k}{(q^2;q^2)_k}
    \frac{(q^{2m};q^2)_{n\!-\!1\!-\!k}}{(q^{2};q^2)_{n\!-\!1\!-\!k}}(-ab)^{n\!-\!k\!-\!1}
    q^{(n\!-\!1)^2\!-\!(n\!-\!1)\!-\!k^2\!+\!k}\notag       \\
        & =\sum_{k=0}^{n-1}\frac{(a, b;q^2)_k}{(q^2;q^2)_k}
    \frac{(q^{2n\!-\!2k};q^2)_{m\!-\!1\!}}{(q^{2};q^2)_{m\!-\!1}}(-ab)^{n\!-\!k\!-\!1}
    q^{(n\!-\!1)(n\!-\!2)\!-\!k^2\!+\!k},
  \end{align}
  and the right side of   (\ref{carlitz-m}) becomes
  \begin{align}\label{app-equ-1-proof-2}
    RHS & =\sum_{k=0}^{n-1}\frac{(-b)^kq^{k^2-k}}{(q^2;q^2)_k(q^2;q^2)_{n-k-1}}
    \frac{(a;q^2)_{m+n-1}}{(aq^{2n-2k-2};q^2)_m}\notag        \\
        & =\sum_{k=\!-\!\frac{n\!-\!1}{2}}^{\frac{n\!-\!1}{2}}
    \frac{(-b)^{\frac{n-1}{2}\!+\!k}q^{(\frac{n-1}{2}\!+\!k)^2-(\frac{n-1}{2}\!+\!k)}}
    {(q^2;q^2)_{\frac{n-1}{2}\!+\!k}(q^2;q^2)_{\frac{n\!-\!1}{2}\!-\!k}}
    \frac{(a;q^2)_{m\!+\!n\!-\!1}}{(aq^{n\!-\!2k-1};q^2)_{m}}\notag      \\
        & =\sum_{k=\!-\!\frac{n\!-\!1}{2}}^{\frac{n\!-\!1}{2}}
    \frac{(q^{n\!-\!2k\!+\!1};q^2)_k(a;q^2)_{n\!+\!m\!-\!1}}
    {(q^2;q^2)_{\frac{n-1}{2}}^2(q^{n\!+\!1};q^2)_k(aq^{n\!-\!2k\!-\!1};q^2)_m}
    (-b)^{\frac{n-1}{2}\!+\!k}q^{(\frac{n-1}{2}\!+\!k)(\frac{n-3}{2}\!+\!k)}\notag  \\
        & =\frac{(q;q^2)_n}{(q^2;q^2)_{\frac{n\!-\!1}{2}}^2(1\!-\!q^n)}
    \frac{(a;q^2)_{n\!+\!m\!-\!1}}{(q;q^2)_n}(1-q^n)\notag    \\
        & \ \ \ \ \ \ \ \ \times\sum_{k=\!-\!\frac{n\!-\!1}{2}}^{\frac{n\!-\!1}{2}}
    \frac{(q^{n\!-\!2k\!+\!1};q^2)_k}
    {(q^{n\!+\!1};q^2)_k}\frac{(-b)^{\frac{n-1}{2}\!+\!k}q^{\frac{(n\!-\!1)(n\!-\!3)}{4}\!+\!k^2\!+\!(n\!-\!2)k}}
    {(aq^{n\!-\!2k\!-\!1};q^2)_m},
  \end{align}
  where
  $$ (q^{n\!-\!2k\!+\!1};q^2)_k=(-1)^kq^{k(n-k)}(q^{1-n};q^2)_k, $$
  and
  $$\frac{(q;q^2)_n}{(q^2;q^2)_{\frac{n-1}{2}}^2(1-q^n)}=\begin{bmatrix}
      n\!-\!1 \\\frac{n-1}{2}
    \end{bmatrix}_{q^2}\begin{bmatrix}
      2n\!-\!1 \\n\!-\!1
    \end{bmatrix}_q\frac{1}{(-q;q)_{n-1}^2}, $$
  substituting the above two equations into (\ref{app-equ-1-proof-2}) to obtain
  \begin{align}
    RHS & =\begin{bmatrix}
  n\!-\!1 \\\frac{n\!-\!1}{2}
\end{bmatrix}_{q^2}\begin{bmatrix}
          2n\!-\!1 \\n\!-\!1
        \end{bmatrix}_q\frac{(\!-\!1)^{\frac{n\!-\!1}{2}}q^{\frac{(n\!-\!1)(n\!-\!3)}{4}}}{(-q;q)_{n-1}^2}
    \frac{(a;q^2)_{n\!+\!m\!-\!1}}{(q;q^2)_n}\notag\\
        & \ \ \ \ \ \ \ \times(1-q^n)\sum_{k=\!-\!\frac{n\!-\!1}{2}}^{\frac{n\!-\!1}{2}}
    \frac{(q^{1-n};q^2)_k}{(q^{1+n};q^2)_k}\frac{b^{\frac{n-1}{2}\!+\!k}q^{2nk-2k}}
    {(aq^{n\!-\!2k\!-\!1};q^2)_m}, \notag
  \end{align}
  combining (\ref{app-equ-1-proof-1}) yields (\ref{app-lemma-equ-1}).
\end{proof}

\subsection{The proof of Theorem \ref{app-th1}}
\begin{proof}
  Let $ a\rightarrow q$ , $ b\rightarrow -q$  in (\ref{app-lemma-equ-1}),  we have
  \begin{align}\label{app-th1-proof-0}
    \sum_{k=0}^{n-1} & \frac{(q^2;q^4)_k}{(q^2;q^2)_k}(q^{2n\!-\!2k};q^2)_{m\!-\!1}
    q^{-k^2\!-\!k}\notag        \\
          & =\begin{bmatrix}
    n\!-\!1 \\\frac{n\!-\!1}{2}
  \end{bmatrix}_{q^2}\begin{bmatrix}
 2n\!-\!1 \\n\!-\!1
\end{bmatrix}_q\frac{(q^2;q^2)_{m\!-\!1}}{(-q;q)_{n\!-\!1}^2}q^{\!-\!\frac{(n\!-\!1)(3n\!+\!1)}{4}}
    \notag\\
          & \ \ \ \ \ \  \ \ \ \ \times (1\!-\!q^n)\!\sum_{k=\!-\!\frac{n\!-\!1}{2}}^{\frac{n\!-\!1}{2}}
    \frac{(q^{1\!-\!n};q^2)_k}{(q^{1\!+\!n};q^2)_k}
    \frac{(q^{2n\!+\!1};q^2)_{m\!-\!1}}{(q^{n\!-\!2k};q^2)_m}(\!-\!1)^kq^{2nk\!-\!k}.
  \end{align}
  On the one hand, on the left side of (\ref{app-th1-proof-0}),  when
  $ 0\leq k<m-1 $,  $ (q^{2n-2k};q^2)_{m-1}$ contains a factor
  $ (1-q^n) $,  due to $ 1-q^n \equiv0 \mod \Phi_n (q)$,  hence
  \begin{align}\label{app-th1-proof-1}
     & \sum_{k=0}^{n-1}
    \frac{(q^2;q^4)_k}{(q^2;q^2)_k}(q^{2n-2k};q^2)_{m\!-\!1}q^{\!-\!k^2\!-\!k}\notag \\
     & \equiv\sum_{k=m\!-\!1}^{n\!-\!1}\frac{(q^2;q^4)_k}
    {(q^2;q^2)_k}(q^{\!-\!2k};q^2)_{m\!-\!1}
    q^{\!-\!k^2\!-\!k}\mod \Phi_n(q)\notag\\
     & \equiv \sum_{k=0}^{n-m}\frac{(q^2;q^4)_{k\!+\!m\!-\!1}}
    {(q^2;q^2)_{k\!+\!m\!-\!1}}
    (q^{\!-\!2k\!-\!2m\!+\!2};q^2)_{m\!-\!1}
    q^{k\!+\!m\!-\!(k\!+\!m)^2}\mod \Phi_n(q),
  \end{align}
  where
  \begin{align}
    (q^{\!-\!2k\!-\!2m\!+\!2};q^2)_{m\!-\!1}=(-1)^{m-1}q^{-(m-1)(2k+m)}\frac{(q^2;q^2)_{m+k-1}}{(q^2;q^2)_{k}}, \notag
  \end{align}
  substituting the above equation into (\ref{app-th1-proof-1}) to obtain
  \begin{align}\label{app-th1-proof-2}
     & \sum_{k=0}^{n-1}
    \frac{(q^2;q^4)_k}{(q^2;q^2)_k}(q^{2n-2k};q^2)_{m\!-\!1}q^{\!-\!k^2\!-\!k}\notag \\
     & \equiv(\!-\!1)^{m\!-\!1}q^{\!-\!2m^2\!+\!2m}(q^2;q^4)_{m\!-\!1}
    \sum_{k=0}^{n-m}\frac{(q^{4m\!-\!2};q^4)_k}{(q^2;q^2)_k}q^{3k\!-\!k^2\!-\!4km}\mod \Phi_n(q).
  \end{align}
  On the other hand, in \cite[Th 1.2]{Panhao-2007} and \cite[(1.5)]{Ljx-Ph-Zy-2015}, Pan et al. provided a $q$-analogue of Morley's congruence as follows:
  \begin{align*}
    (-\!1)^{\frac{n-1}{2}}q^{\frac{n^2-1}{4}}\begin{bmatrix}
      n\!-\!1 \\\frac{n-1}{2}
    \end{bmatrix}_{q^2}\equiv (-q;q)_{n-1}^2-\frac{n^2-1}{24}(1-q^n)^2\mod \Phi_n(q)^3,
  \end{align*}
  uses the special case of the above $q$-congruence as follows:
  \begin{align}\label{equ-pan}
    \begin{bmatrix}
      n\!-\!1 \\\frac{n-1}{2}
    \end{bmatrix}_{q^2}\equiv(-1)^{\frac{n-1}{2}}q^{\frac{1-n^2}{4}}(-q;q)_{n-1}^2\mod \Phi_n(q)^2,
  \end{align}
  and the following $q$-congruence, as given by Guo in \cite[(3.1)]{GJW-2018},
  \begin{align}\label{equ-guo}
    \begin{bmatrix}
      2n\!-\!1 \\n\!-\!1
    \end{bmatrix}_q\equiv(-1)^{n-1}q^{\binom{n}{2}}\mod \Phi_n(q)^2,
  \end{align}
  we know that on the right side of (\ref{app-th1-proof-0}),  we can deduce that
  \begin{align}\label{app-th1-proof-3}
     & \begin{bmatrix}
         n\!-\!1 \\\frac{n\!-\!1}{2}
       \end{bmatrix}_{q^2}\begin{bmatrix}
      2n\!-\!1 \\n\!-\!1
    \end{bmatrix}_q\frac{(q^2;q^2)_{m\!-\!1}}{(-q;q)_{n\!-\!1}^2}q^{\!-\!\frac{(n\!-\!1)(3n\!+\!1)}{4}}
    \notag     \\
     & \ \ \ \ \ \  \ \ \ \ \times (1\!-\!q^n)\!\sum_{k=\!-\!\frac{n\!-\!1}{2}}^{\frac{n\!-\!1}{2}}
    \frac{(q^{1\!-\!n};q^2)_k}{(q^{1\!+\!n};q^2)_k}
    \frac{(q^{2n\!+\!1};q^2)_{m\!-\!1}}{(q^{n\!-\!2k};q^2)_m}(\!-\!1)^kq^{2nk\!-\!k}\notag   \\
     & \equiv\! (\!-\!1)^{\frac{n\!-\!1}{2}}q^{\!-\!\frac{n^2\!-\!1}{2}}(q^2;q^2)_{m\!-\!1}
    (1\!-\!q^n)\!\sum_{k=\!-\!\frac{n\!-\!1}{2}}^{\frac{n\!-\!1}{2}}
    \frac{(q;q^2)_{m-1}}{(q^{n\!-\!2k};q^2)_m}(\!-\!1)^kq^{\!-\!k}\mod \Phi_n(q),
  \end{align}
  in the summation formula to the right of the above $q$-congruence,
  only when $0\leq k\leq m\!-\!1$, $(q^{n\!-\!2k};q^2)_m$ contains the factor $(1-q^n)$. By
  combining this with $ q^n\equiv1\mod\Phi_n(q)$, we obtain:
  \begin{align}\label{app-th1-proof-4}
     & (1\!-\!q^n)\!\sum_{k=\!-\!\frac{n\!-\!1}{2}}^{\frac{n\!-\!1}{2}}
    \frac{(q;q^2)_{m-1}}{(q^{n\!-\!2k};q^2)_m}(\!-\!1)^kq^{\!-\!k}\notag \\
     & \equiv(1\!-\!q^n)\!\sum_{k=0}^{m-1}
    \frac{(q;q^2)_{m-1}}{(q^{n-2k};q^2)_{k}(1\!-\!q^n)(q^{n\!+\!2};q^2)_{m\!-\!1\!-\!k}}(\!-\!1)^kq^{\!-\!k}\mod\Phi_n(q)\notag \\
     & \equiv \sum_{k=0}^{m-1}
    \frac{(q;q^2)_{m-1}}{(q^{-2k};q^2)_{k}(q^{2};q^2)_{m\!-\!1\!-\!k}}(\!-\!1)^kq^{-\!k}\mod\Phi_n(q)\notag          \\
     & \equiv \sum_{k=0}^{m-1}
    \frac{(q;q^2)_{m-1}}{(q^{2};q^2)_{k}(q^{2};q^2)_{m\!-\!1\!-\!k}}q^{k^2}\mod\Phi_n(q)\notag \\
     & \equiv\frac{(q;q^2)_{m-1}}{(q^2;q^2)_{m-1}}\sum_{k=0}^{m-1}\begin{bmatrix}
  m\!-\!1 \\k
\end{bmatrix}_{q^2}q^{k^2}\mod\Phi_n(q),
  \end{align}
  by  \cite[Ex 1.2(vi)]{gasper-book},  we know that
  $$ (z;q)_{m-1}=\sum_{k=0}^{m-1}\begin{bmatrix}
      m\!-\!1 \\k
    \end{bmatrix}_q (-z)^kq^{\frac{k^2-k}{2}}, $$
  let $ q\rightarrow q^2$  and then let $ z\rightarrow -q$  in the above equation,  we obtain
  $$ (-q;q^2)_{m-1}=\sum_{k=0}^{m-1}\begin{bmatrix}
      m\!-\!1 \\k
    \end{bmatrix}_{q^2}q^{k^2}, $$
  substituting the above equation into (\ref{app-th1-proof-4}),  we have
  \begin{align}
    (1\!-\!q^n) & \!\sum_{k=\!-\!\frac{n\!-\!1}{2}}^{\frac{n\!-\!1}{2}}
    \frac{(q^{1\!-\!n};q^2)_k}{(q^{1\!+\!n};q^2)_k}
    \frac{(q^{2n\!+\!1};q^2)_{m-1}}{(q^{n\!-\!2k};q^2)_m}(\!-\!1)^kq^{2kn\!-\!k}
    \equiv\frac{(q^2;q^4)_{m-1}}{(q^2;q^2)_{m-1}}\mod\Phi_n(q), \notag
  \end{align}
  combining (\ref{app-th1-proof-2}), (\ref{app-th1-proof-3}) and (\ref{app-th1-proof-4}),
  we obtain (\ref{app-th1-equ}).
\end{proof}

\subsection{The proof of Theorem \ref{app-th2}}
\

\

Before proving Theorem \ref{app-th2},  we first prove the following lemma.
\begin{lemma}\label{app-th2-lamma2}
  Let $ n$  be a positive odd integer. Then
  \begin{align}
    \frac{(q^{2d+1};q^2)_{n}}{(q;q)_{n-1}}\equiv (1-q^n) \mod\Phi_n(q)^2.
  \end{align}
\end{lemma}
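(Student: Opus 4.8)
The plan is to reduce the claimed congruence to an elementary bookkeeping statement about which single factor of the shifted factorial $(q^{2d+1};q^2)_{n}=\prod_{j=0}^{n-1}(1-q^{2d+1+2j})$ is divisible by $\Phi_n(q)$. Since $1-q^{j}$ never vanishes at a primitive $n$-th root of unity for $1\le j\le n-1$, the polynomial $(q;q)_{n-1}$ is coprime to $\Phi_n(q)$, hence a unit modulo $\Phi_n(q)^{2}$; so the left-hand side is a well-defined residue mod $\Phi_n(q)^{2}$, and after clearing this denominator it suffices to prove
\[
  (q^{2d+1};q^2)_{n}\equiv (1-q^{n})\,(q;q)_{n-1}\pmod{\Phi_n(q)^{2}}.
\]
Both sides are divisible by $\Phi_n(q)$, so the content is that their "first-order parts'' agree.

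Next I would isolate the distinguished factor. Because $n$ is odd, $j\mapsto 2d+1+2j$ maps $\{0,\dots,n-1\}$ bijectively onto a complete residue system modulo $n$, so there is exactly one index $j_{0}$ with $n\mid 2d+1+2j_{0}$; under the natural restriction $|d|\le\frac{n-1}{2}$ (satisfied for every value of $d$ used in Theorem \ref{app-th2}) one has precisely $2d+1+2j_{0}=n$ with $j_{0}=\tfrac{n-1}{2}-d$. Pulling this factor out gives the exact identity
\[
  (q^{2d+1};q^2)_{n}=(1-q^{n})\,N(q),\qquad
  N(q):=\prod_{\substack{j=0\\ j\neq j_{0}}}^{n-1}\bigl(1-q^{2d+1+2j}\bigr),
\]
in which every exponent appearing in $N(q)$ is now $\not\equiv 0\pmod n$.

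It then remains to compare $N(q)$ with $(q;q)_{n-1}$ modulo $\Phi_n(q)$. Since $\Phi_n(q)\mid q^{n}-1$, each $q^{2d+1+2j}$ may be replaced by $q^{(2d+1+2j)\bmod n}$; as $j$ runs over $\{0,\dots,n-1\}\setminus\{j_{0}\}$ these residues run over $\{1,2,\dots,n-1\}$, whence $N(q)\equiv\prod_{i=1}^{n-1}(1-q^{i})=(q;q)_{n-1}\pmod{\Phi_n(q)}$. By Gauss's lemma the monic integer polynomial $\Phi_n(q)$ divides both $N(q)-(q;q)_{n-1}$ and $q^{n}-1$ in $\mathbb{Z}[q]$, say $N(q)=(q;q)_{n-1}+\Phi_n(q)R(q)$ and $1-q^{n}=\Phi_n(q)S(q)$; multiplying yields $(q^{2d+1};q^2)_{n}=(1-q^{n})(q;q)_{n-1}+\Phi_n(q)^{2}S(q)R(q)$, which is the displayed congruence, and dividing by the unit $(q;q)_{n-1}$ completes the proof.

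The one place that needs care is the middle paragraph: one must check that the divisible factor of $(q^{2d+1};q^2)_{n}$ is $1-q^{n}$ itself, not $1-q^{\ell n}$ for some odd $\ell>1$ (which would produce an extra factor $\ell$ in the first-order term), and this is exactly where the size constraint on $d$ is used. A minor technical point, handled without difficulty, is that for $d<0$ the exponents $2d+1+2j$ may be negative, so the manipulations take place in $\mathbb{Z}[q,q^{-1}]$ localized at $\Phi_n(q)$, with $q$ invertible modulo $\Phi_n(q)$.
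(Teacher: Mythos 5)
Your proof is correct, and it takes a genuinely different route from the paper's. The paper also begins by splitting off the factor $1-q^{n}$ at position $j_{0}=\tfrac{n-1}{2}-d$, but it then handles the leftover product $(q^{2d+1};q^2)_{\frac{n-1}{2}-d}(q^{n+2};q^2)_{\frac{n-1}{2}+d}$ by reversing one of the two pieces, reducing exponents mod $n$ to get $(q^{-2};q^{-2})_{\frac{n-1}{2}-d}(q^{2};q^2)_{\frac{n-1}{2}+d}$, and expressing the result through $(q^2;q^2)_{\frac{n-1}{2}}^2$; to compare this with $(q;q)_{n-1}$ it then invokes the $q$-analogue of Morley's congruence \eqref{equ-pan} from Pan et al.\ and a congruence from \cite[(2.7)]{wc-nhx-2022} for the central $q$-binomial coefficient. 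Your argument replaces all of that external machinery with the single observation that, since $2$ is invertible modulo the odd integer $n$, the exponents $2d+1+2j$ for $j\neq j_{0}$ reduce bijectively onto $\{1,\dots,n-1\}$, so the leftover product is congruent to $(q;q)_{n-1}$ modulo $\Phi_n(q)$ term by term; this is shorter, self-contained, and makes transparent why the answer is exactly $1-q^{n}$. You are also more careful than the paper on two points it leaves implicit: the restriction $|d|\le\tfrac{n-1}{2}$ (without which the distinguished factor would be $1-q^{\ell n}$ for some odd $\ell>1$, changing the first-order term by a factor of $\ell$ --- and indeed the lemma is only applied for $d$ in the range of Theorem \ref{app-th2}), and the interpretation of negative exponents for $d<0$ in $\mathbb{Z}[q,q^{-1}]$. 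What the paper's computation buys in exchange is a set of intermediate identities (the reversal formula and the reduction to $(q^2;q^2)_{\frac{n-1}{2}}^2$) that it reuses verbatim in the proofs of Theorems \ref{app-lemma-1} and \ref{app-th1}, so the two approaches are complementary rather than one subsuming the other.
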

\begin{proof}
  Clearly,
  \begin{align}
    \frac{(q^{2d+1};q^2)_{n}}{(q;q)_{n-1}} & =
    \frac{(q^{2d+1};q^2)_{\frac{n\!-\!1}{2}\!-\!d}(1\!-\!q^n)(q^{n\!+\!2};q^2)_{\frac{n-1}{2}\!+\!d}}{(q;q)_{n-1}}\notag \\
&=(1\!-\!q^n)\frac{(q^{n-2};q^{-2})_{\frac{n\!-\!1}{2}\!-\!d}(q^{n\!+\!2};q^2)_{\frac{n-1}{2}\!+\!d}}{(q;q)_{n-1}}\notag     
  \end{align}
  since
   \begin{align*}
    (1-q^n)\equiv 0\mod\Phi_n(q), 
   \end{align*}
   and 
  \begin{align*}
    \frac{(q^{n-2};q^{-2})_{\frac{n\!-\!1}{2}\!-\!d}(q^{n\!+\!2};q^2)_{\frac{n-1}{2}\!+\!d}}{(q;q)_{n-1}}\equiv \frac{(q^{-2};q^{-2})_{\frac{n\!-\!1}{2}\!-\!d}(q^{2};q^2)_{\frac{n-1}{2}\!+\!d}}{(q;q)_{n-1}} \mod\Phi_n(q),
  \end{align*}
  hence
  \begin{align*}
    \frac{(q^{2d+1};q^2)_{n}}{(q;q)_{n-1}} &\equiv (1\!-\!q^n)\frac{(q^{-2};q^{-2})_{\frac{n\!-\!1}{2}\!-\!d}(q^{2};q^2)_{\frac{n-1}{2}\!+\!d}}{(q;q)_{n-1}} \mod\Phi_n(q)^2\notag \\
    & \equiv (1\!-\!q^n)\frac{(q^2;q^2)_{\frac{n-1}{2}}^2(\!-\!1)^{\frac{n\!-\!1}{2}\!-\!d}q^{-\frac{n^2-1}{4}-d^2}}{(q;q)_{n-1}}
        \frac{(q;q^2)_d}{(q^{\!-\!2d\!+\!1};q^2)_d}\mod\Phi_n(q)^2,
  \end{align*}
  where we have
  $$ (q^{\!-\!2d\!+\!1};q^2)_d=(-1)^dq^{-d^2}(q;q^2)_d,$$
  and by (\ref{equ-pan}) and \cite[(2.7)]{wc-nhx-2022}, we deduce that
  \begin{align}
    \frac{(q^{2d+1};q^2)_{n}}{(q;q)_{n-1}} & \equiv (1\!-\!q^n)\frac{(q^2;q^2)_{\frac{n-1}{2}}^2
    (\!-\!1)^{\frac{n\!-\!1}{2}}q^{-\frac{n^2-1}{4}}}{(q;q)_{n-1}}\mod\Phi_n(q)^2\notag    \\
& \equiv (1\!-\!q^n)(\!-\!1)^{\frac{n\!-\!1}{2}}q^{-\frac{n^2-1}{4}}(-q;q)_{n-1}
    \begin{bmatrix}
      n\!-\!1 \\\frac{n-1}{2}
    \end{bmatrix}_{q^2}^{-1}\mod\Phi_n(q)^2\notag         \\
& \equiv(1-q^n)\mod\Phi_n(q)^2.
  \end{align}
  Therefore, we complete the proof of Lemma \ref{app-th2-lamma2}.
\end{proof}
\begin{lemma}
  Let $ n$  be a positive odd integer, $ d \in\{-\frac{n-3}{2}, -\frac{n-5}{2}, \dots, \frac{n-5}{2},
    \frac{n-3}{2}\}$, and let
  $$\Psi_{n, d}(q)=\sum_{k=\!-\!\frac{n\!-\!1}{2}, k\neq d}^{\frac{n\!-\!1}{2}}
    \frac{(\!-\!1)^kq^{2dk\!-\!k}}{1\!-\!q^{2d-2k}}. $$
  Then
  \begin{align}\label{congru-lemma-eq-0}
    \Psi_{n, d}(q) & \equiv(-1)^{d+1}dq^{2d^2-d}+\mathbf{Sgn}(d)(\!-\!1)^{\frac{n\!-\!1}{2}}q^{2d^2\!+\!\frac{n\!-\!1}{2}}
    \sum_{j=1}^{|d|}\frac{q^{j\!-\!2dj}(1\!+\!q^{(4j\!-\!2)d})}{1\!-\!q^{4j\!-\!2}}\notag     \\
        & \ \ \ \ \ \ \ \times\{1\!+\!(\!-\!1)^{j\!+\!d}\!+\!q^{2j\!-\!1}[(\!-\!1)^{j\!+\!d}\!-\!1]\}\mod\Phi_n(q).
  \end{align}
\end{lemma}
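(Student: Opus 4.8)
The plan is to work throughout modulo $\Phi_n(q)$, where $q^n\equiv1$, and to reduce the sum to an explicit rational function of $q$. I would first note that, since $n$ is odd, every denominator $1-q^{2(d-k)}$ that actually occurs (i.e. with $k$ in the stated range and $k\neq d$) is invertible modulo $\Phi_n(q)$. Writing the summand as $(-q^{2d-1})^{k}/(1-q^{2(d-k)})$ and substituting $\ell=k-d$ moves the excluded singular term to $\ell=0$, with $\ell$ now ranging over the shifted complete residue system $S'=\{-\tfrac{n-1}{2}-d,\dots,\tfrac{n-1}{2}-d\}$. The crucial observation is that $S'$ differs from the symmetric window $S=\{-\tfrac{n-1}{2},\dots,\tfrac{n-1}{2}\}$ in exactly $|d|$ places, and that translating an element of $S'$ by $\mp n$ into $S$ leaves $q^{-2\ell}$ unchanged mod $\Phi_n(q)$ while multiplying $(-q^{2d-1})^{d+\ell}$ by $(-q^{2d-1})^{\mp n}\equiv-1$ (using $q^n\equiv1$ and $n$ odd). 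Carrying out these $|d|$ translations, I would arrive at
\[
\Psi_{n,d}(q)\equiv\Sigma_1-2\Sigma_2\pmod{\Phi_n(q)},\qquad
\Sigma_1=\sum_{\ell\in S,\ \ell\neq0}\frac{(-q^{2d-1})^{d+\ell}}{1-q^{-2\ell}},
\]
where $\Sigma_2$ is the sum of the $|d|$ translated terms (those sitting at $\ell=\tfrac{n-1}{2}-d+j$, $j=1,\dots,|d|$).

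Next I would evaluate $\Sigma_1$: pull out $(-q^{2d-1})^{d}$, pair $\ell\leftrightarrow-\ell$, and use $1/(1-q^{-2\ell})=-q^{2\ell}/(1-q^{2\ell})$ together with $(1-q^{4d\ell})/(1-q^{2\ell})=\sum_{m=0}^{2d-1}q^{2m\ell}$ (for $d>0$; the sign flip in this factorisation when $d<0$ is what ultimately produces $\mathbf{Sgn}(d)$), so that $\Sigma_1=(-q^{2d-1})^{d}\sum_{m=0}^{2d-1}\sum_{\ell=1}^{(n-1)/2}(-q^{2m-2d+1})^{\ell}$. Summing each inner geometric progression splits $\Sigma_1$ into a constant part and an endpoint part. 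For the constant part one gets $\sum_{m}\frac{-q^{2m-2d+1}}{1+q^{2m-2d+1}}=-d$, because the exponents $2m-2d+1$ range over the symmetric set $\{\pm1,\pm3,\dots,\pm(2d-1)\}$, so $\sum_m\frac{1}{1+q^{2m-2d+1}}=d$; multiplying by $(-q^{2d-1})^{d}$ yields precisely the leading term $(-1)^{d+1}d\,q^{2d^2-d}$. For the endpoint part I would reduce $(-q^{2m-2d+1})^{(n-1)/2}$ via $q^{(2t+1)(n-1)/2}\equiv q^{(n-1)/2-t}$ and pair once more in $m$, obtaining $2(-1)^{d+(n-1)/2}q^{2d^2+(n-1)/2}\sum_{j=1}^{d}q^{j-d}/(1+q^{2j-1})$.

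A parallel reduction of $\Sigma_2$ — using $q^{-2\ell}\equiv q^{2d-2j+1}$ at $\ell=\tfrac{n-1}{2}-d+j$ and the analogous reduction of the $q$-power in its numerator, then re-indexing $j\mapsto d+1-j$ — would give $\Sigma_2\equiv(-1)^{d+1}(-1)^{(n-1)/2}q^{2d^2+(n-1)/2}\sum_{j=1}^{d}(-1)^{j}q^{j-2dj}/(1-q^{2j-1})$. Assembling the three pieces,
\[
\Psi_{n,d}(q)\equiv(-1)^{d+1}d\,q^{2d^2-d}+2(-1)^{d+(n-1)/2}q^{2d^2+(n-1)/2}\sum_{j=1}^{|d|}\!\left(\frac{q^{j-d}}{1+q^{2j-1}}+\frac{(-1)^{j}q^{j-2dj}}{1-q^{2j-1}}\right),
\]
and the proof would be finished by checking the resulting rational-function identity: that after clearing the common denominator $1-q^{4j-2}$ this expression equals the right-hand side of the claim. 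I expect this last verification to be the main obstacle: it does \emph{not} hold term-by-term in $j$ (already for $d=2$ the contributions coming from $\Sigma_1$ and those coming from $\Sigma_2$ only balance once summed), so it must be proved either by telescoping the difference or by induction on $|d|$, while carefully tracking the three independent sources of signs — the alternating $(-1)^{k}$, the wrap-around sign $(-1)^{(n-1)/2}$ introduced by the translations by $n$, and the parity $(-1)^{j+d}$ hidden in the reduced exponents. The case $d=0$ needs no work, since then $S=S'$ and $\Psi_{n,0}(q)$ already cancels in the pairs $k\leftrightarrow-k$.
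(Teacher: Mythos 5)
Your reductions are essentially correct and run parallel to the paper's own argument: both recentre the sum at the excluded index $k=d$, pair $k\leftrightarrow -k$, expand $(q^{-2dk}-q^{2dk})/(q^{-k}-q^{k})=\sum_{j=1}^{d}(q^{(2j-1)k}+q^{-(2j-1)k})$, sum the resulting geometric progressions, and reduce the boundary powers via $q^{(2t+1)(n-1)/2}\equiv q^{(n-1)/2-t}$. I checked your $\Sigma_1$, your $\Sigma_2$ (including the wrap-around sign $(-q^{2d-1})^{\pm n}\equiv-1$), and the assembled expression $\Psi_{n,d}\equiv(-1)^{d+1}dq^{2d^2-d}+2(-1)^{d+\frac{n-1}{2}}q^{2d^2+\frac{n-1}{2}}\sum_{j=1}^{|d|}\bigl(\frac{q^{j-d}}{1+q^{2j-1}}+\frac{(-1)^{j}q^{j-2dj}}{1-q^{2j-1}}\bigr)$: all of this is right, and the constant parts of the geometric sums do produce the leading term $(-1)^{d+1}dq^{2d^2-d}$.

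The gap is exactly the step you flag yourself and then leave open: the identity (stated here for $d>0$)
\[
2(-1)^d\sum_{j=1}^{d}\Bigl(\frac{q^{j-d}}{1+q^{2j-1}}+\frac{(-1)^{j}q^{j-2dj}}{1-q^{2j-1}}\Bigr)
=\sum_{j=1}^{d}\frac{q^{j-2dj}(1+q^{(4j-2)d})}{1-q^{4j-2}}\bigl\{1+(-1)^{j+d}+q^{2j-1}[(-1)^{j+d}-1]\bigr\}
\]
is true (I verified $d=1,2$) but, as you yourself observe, it fails term-by-term, so it needs a genuine separate argument — telescoping or induction — which you do not supply. Without it the proof is incomplete. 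The paper sidesteps this entirely by choosing the other natural decomposition: after the shift it pairs $k\leftrightarrow-k$ only over the overlap $1\le k\le\frac{n-1}{2}-d$ of the two shifted ranges and keeps the remaining $2d$ consecutive terms as a tail; that tail splits into two blocks of length $|d|$ which, after reduction mod $\Phi_n(q)$, sit over the denominators $1+q^{2j-1}$ and $1-q^{2j-1}$ with the \emph{same} numerator $q^{d-2dj}+q^{2dj-d}$, so the $j$-th contributions combine through the one-line partial-fraction identity $\frac{(-1)^d}{1+q^{2j-1}}+\frac{(-1)^j}{1-q^{2j-1}}=(-1)^d\,\frac{1+(-1)^{j+d}+q^{2j-1}[(-1)^{j+d}-1]}{1-q^{4j-2}}$, which is precisely the bracket in the statement — no further summation identity is required. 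To close your version, either prove the displayed identity directly or regroup $\Sigma_1-2\Sigma_2$ so that the $j$-th pieces align over the common denominator $1-q^{4j-2}$ as in the paper.
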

\begin{proof}
  When $ d=0$,  similar to Guo's proof in  \cite{gjw2019},
  \begin{align}\label{proof-congru-lemma-d equiv 0}
    \Psi_{n, 0}(q) & =\sum_{k=\!-\!\frac{n\!-\!1}{2}, k\neq 0}^{\frac{n\!-\!1}{2}}
    \frac{(-1)^kq^{-k}}{1-q^{-2k}}
    =\sum_{k=1}^{\frac{n-1}{2}}\frac{(-1)^kq^k}{1-q^{2k}}+
    \sum_{k=1}^{\frac{n-1}{2}}\frac{(-1)^kq^{-k}}{1-q^{-2k}}\notag       \\
        & =\sum_{k=1}^{\frac{n-1}{2}}\frac{(-1)^k(q^k-q^k)}{1-q^{2k}}=0,
  \end{align}
  Therefore,  (\ref{congru-lemma-eq-0}) holds when $ d=0$. Generally,  if
  $ -\frac{n-1}{2}< d<\frac{n-1}{2}$  and $ d\neq 0$,  we have
  \begin{align}\label{proof-congru-lemma-1}
    \Psi_{n, d}(q) & =\sum_{k=\!-\!\frac{n\!-\!1}{2}, k\neq d}^{\frac{n\!-\!1}{2}}
    \frac{(-1)^kq^{2dk-k}}{1-q^{2d-2k}}
    =\sum_{k=-\frac{n-1}{2}}^{d-1}\frac{(\!-\!1)^kq^{2dk\!-\!k}}{1\!-\!q^{2d\!-\!2k}}
    +\sum_{k=d+1}^{\frac{n-1}{2}}\frac{(\!-\!1)^kq^{2dk\!-\!k}}{1\!-\!q^{2d\!-\!2k}}\notag  \\
        & =\sum_{k=1}^{\frac{n-1}{2}\!+\!d}\frac{(\!-\!1)^{k\!+\!d}q^{2d^2\!-\!d\!+\!k\!-\!2dk}}{1-q^{2k}}
    +\sum_{k=1}^{\frac{n-1}{2}\!-\!d}\frac{(\!-\!1)^{k\!+\!d}q^{2d^2\!-\!d\!-\!k\!+\!2dk}}{1-q^{-2k}}\notag       \\
        & =(\!-\!1)^dq^{2d^2\!-\!d}\left(\sum_{k=1}^{\frac{n-1}{2}\!+\!d}\frac{(\!-\!1)^kq^{k\!-\!2dk}}{1-q^{2k}}
    +\sum_{k=1}^{\frac{n-1}{2}\!-\!d}\frac{(\!-\!1)^kq^{2dk\!-\!k}}{1-q^{-2k}}\right).
  \end{align}
  If $ 0<d<\frac{n-1}{2}$,  then
  \begin{align}\label{proof-congru-lemma-2}
     & \sum_{k=1}^{\frac{n-1}{2}\!+\!d}\frac{(\!-\!1)^kq^{k\!-\!2dk}}{1-q^{2k}}
    +\sum_{k=1}^{\frac{n-1}{2}\!-\!d}\frac{(\!-\!1)^kq^{2dk\!-\!k}}{1-q^{-2k}}\notag   \\
     & =\sum_{k=1}^{\frac{n-1}{2}\!-\!d}(\!-\!1)^k\left(\frac{q^{k\!-\!2dk}}{1-q^{2k}}
    +\frac{q^{2dk\!-\!k}}{1-q^{-2k}}\right)\!+\!
    \sum_{k=\frac{n-1}{2}\!-\!d\!+\!1}^{\frac{n\!-\!1}{2}\!+\!d}\frac{(-1)^kq^{k\!-\!2dk}}{1-q^{2k}},
  \end{align}
  where
  \begin{align}\label{proof-congru-lemma-3}
     & \sum_{k=1}^{\frac{n-1}{2}\!-\!d}(\!-\!1)^k\left(\frac{q^{k\!-\!2dk}}{1-q^{2k}}
    +\frac{q^{2dk\!-\!k}}{1-q^{-2k}}\right)
    =\sum_{k=1}^{\frac{n-1}{2}\!-\!d}(-1)^k\frac{q^{k-2dk}-q^{2dk+k}}{1-q^{2k}}\notag   \\
     & =\sum_{k=1}^{\frac{n-1}{2}\!-\!d}(-1)^k\frac{q^{-2dk}-q^{2dk}}{q^{-k}-q^k}
    =\sum_{k=1}^{\frac{n-1}{2}\!-\!d}(-1)^k\sum_{j=1}^{d}(q^{(2j-1)k}+q^{-(2j-1)k})\notag          \\
     & =\sum_{j=1}^{d}\sum_{k=1}^{\frac{n-1}{2}\!-\!d}[(-1)^kq^{(2j-1)k}+(-1)^kq^{-(2j-1)k}]\notag \\
     & =\sum_{j=1}^{d}\left(\frac{\!-\!q^{2j\!-\!1}\!+\!(\!-\!1)^{\frac{n\!-\!1}{2}\!-\!d}q^{(2j\!-\!1)(\frac{n-1}{2}-d+1)}}{1+q^{2j-1}}
    \!+\!\frac{\!-\!q^{\!-\!2j\!+\!1}\!+\!(\!-\!1)^{\frac{n\!-\!1}{2}\!-\!d}q^{\!-\!(2j\!-\!1)(\frac{n\!-\!1}{2}\!-\!d\!+\!1)}}{1+q^{-2j+1}}\right)\notag \\
     & \equiv -d+\sum_{j=1}^{d}\frac{(\!-\!1)^{\frac{n\!-\!1}{2}\!+\!d}q^{\frac{n\!-\!1}{2}\!+\!j}
    (q^{d\!-\!2dj}\!+\!q^{-d\!+\!2dj})}{1+q^{2j-1}}\mod \Phi_n(q),
  \end{align}
  and
  \begin{align}\label{proof-congru-lemma-4}
     & \sum_{k=\frac{n-1}{2}\!-\!d\!+\!1}^{\frac{n\!-\!1}{2}\!+\!d}\frac{(-1)^kq^{k\!-\!2dk}}{1-q^{2k}}
    =\sum_{k=\frac{n\!-\!1}{2}\!-\!d\!+\!1}^{\frac{n-1}{2}}\frac{(-1)^kq^{k\!-\!2dk}}{1-q^{2k}}
    \!+\!\sum_{k=\frac{n\!-\!1}{2}\!+\!1}^{\frac{n\!-\!1}{2}\!+\!d}\frac{(-1)^kq^{k\!-\!2dk}}{1-q^{2k}}\notag     \\
     & \equiv \sum_{k=1}^{d}\frac{(\!-\!1)^{k+\frac{n\!-\!1}{2}\!-\!d}q^{(k\!+\!\frac{n\!-\!1}{2}\!-\!d)(1\!-\!2d)}}{1-q^{2k-2d-1}}
    \!+\!\sum_{k=1}^{d}\frac{(\!-\!1)^{k\!+\!\frac{n\!-\!1}{2}}q^{(k\!+\!\frac{n\!-\!1}{2})(1\!-\!2d)}}{1-q^{2k-1}}\mod \Phi_n(q)\notag \\
     & \equiv \sum_{k=1}^{d}\frac{(\!-\!1)^{k\!+\!\frac{n\!-\!1}{2}\!+\!1}q^{(\frac{n\!-\!1}{2}\!-\!k\!+\!1)(1-2d)}}{1-q^{-(2k-1)}}
    \!+\!\sum_{k=1}^{d}\frac{(\!-\!1)^{k\!+\!\frac{n\!-\!1}{2}}q^{\frac{n\!-\!1}{2}\!+\!k-\!2dk\!+\!d}}{1-q^{2k-1}}\mod \Phi_n(q)\notag \\
     & \equiv\sum_{k=1}^{d}\frac{(\!-\!q)^{\frac{n-1}{2}\!+\!k}(q^{2dk\!-\!d}\!+\!q^{\!-\!2dk\!+\!d})}{1\!-\!q^{2k-1}}\mod \Phi_n(q),
  \end{align}

  combining (\ref{proof-congru-lemma-1}, \ref{proof-congru-lemma-2},
  \ref{proof-congru-lemma-3}, \ref{proof-congru-lemma-4}),  we know that
  when $ 0<d<\frac{n-1}{2}$,
  \begin{align}\label{proof-congru-lemma-d more than 0}
    \Psi_{n, d}(q)
     & \equiv(-1)^{d+1}dq^{2d^2-d}+(\!-\!1)^{\frac{n\!-\!1}{2}}q^{2d^2+\frac{n-1}{2}}
    \sum_{j=1}^d\frac{q^{j\!-\!2dj}(1\!+\!q^{(4j\!-\!2)d})}{1\!-\!q^{4j\!-\!2}}\notag      \\
     & \ \ \ \ \ \ \ \times\{1\!+\!(\!-\!1)^{j\!+\!d}\!+\!q^{2j\!-\!1}[(\!-\!1)^{j\!+\!d}\!-\!1]\}\mod\Phi_n(q),
  \end{align}
  therefore,  (\ref{congru-lemma-eq-0}) holds when $ 0<d<\frac{n-1}{2}$.
  When $ -\frac{n-1}{2}<d<0$,  make $ l=-d$,  then
  \begin{align}
     & \sum_{k=1}^{\frac{n-1}{2}\!+\!d}\frac{(\!-\!1)^kq^{k\!-\!2dk}}{1-q^{2k}}
    +\sum_{k=1}^{\frac{n-1}{2}\!-\!d}\frac{(\!-\!1)^kq^{2dk\!-\!k}}{1-q^{-2k}}\notag   \\
     & =\sum_{k=1}^{\frac{n-1}{2}\!-\!l}(\!-\!1)^k\left(\frac{q^{k\!+\!2lk}}{1-q^{2k}}
    +\frac{q^{-2lk\!-\!k}}{1-q^{-2k}}\right)\!+\!
    \sum_{k=\frac{n-1}{2}\!-\!l\!+\!1}^{\frac{n\!-\!1}{2}\!+\!l}\frac{(-1)^kq^{-k\!-\!2lk}}{1-q^{-2k}}, \notag
  \end{align}
  similar to (\ref{proof-congru-lemma-3}, \ref{proof-congru-lemma-4}),  we have
  \begin{align}
    \sum_{k=1}^{\frac{n-1}{2}\!-\!l}(\!-\!1)^k & \left(\frac{q^{k\!+\!2lk}}{1\!-\!q^{2k}}
    \!+\!\frac{q^{\!-\!2lk\!-\!k}}{1\!-\!q^{-2k}}\right)
    \equiv\! l\!-\!\sum_{j=1}^{l}\frac{(\!-\!1)^{\frac{n\!-\!1}{2}\!+\!l}q^{\frac{n\!-\!1}{2}\!+\!j}
    (q^{l\!-\!2lj}\!+\!q^{-l\!+\!2lj})}{1+q^{2j-1}}\mod \Phi_n(q), \notag
  \end{align}
  and
  \begin{align}
    \sum_{k=\frac{n-1}{2}\!-\!l\!+\!1}^{\frac{n\!-\!1}{2}\!+\!l}
    \frac{(-1)^kq^{-k\!-\!2lk}}{1-q^{-2k}}\equiv -\sum_{j=1}^{l}
    \frac{(-q)^{\frac{n-1}{2}\!+\!j}(q^{l\!-\!2lj}\!+\!q^{-l\!+\!2lj})}{1-q^{2j-1}}\mod \Phi_n(q), \notag
  \end{align}
  hence
  \begin{align} \label{proof-congru-lemma-d less than 0}
    \Psi_{n, d}(q)
     & \equiv(-1)^{d+1}dq^{2d^2-d}\!-\!(\!-\!1)^{\frac{n\!-\!1}{2}}q^{2d^2+\frac{n-1}{2}}
    \sum_{j=1}^{-d}\frac{q^{j\!-\!2dj}(1\!+\!q^{(4j\!-\!2)d})}{1\!-\!q^{4j\!-\!2}}\notag   \\
     & \ \ \ \ \ \ \ \times\{1\!+\!(\!-\!1)^{j\!+\!d}\!+\!q^{2j\!-\!1}[(\!-\!1)^{j\!+\!d}\!-\!1]\}\mod\Phi_n(q).
  \end{align}
  when $ -\frac{n-1}{2}<d<0$, Thus,  (\ref{congru-lemma-eq-0}) also holds when $ -\frac{n-1}{2}<d<0 $. Combining
  (\ref{proof-congru-lemma-d equiv 0}),
  (\ref{proof-congru-lemma-d more than 0}) and (\ref{proof-congru-lemma-d less than 0})
  we obtain (\ref{congru-lemma-eq-0}).
\end{proof}

Now, let's proceed to prove Theorem \ref{app-th2}.
\begin{proof}
  Let $ m=1$ , $ a=q^{2d+1}$ , $ b=-q^{2d+1}$  in (\ref{app-lemma-equ-1}).  We deduce that
  \begin{align}
    \sum_{k=0}^{n-1}\frac{(q^{4d\!+\!2};q^4)_k}{(q^2;q^2)_k} & q^{\!-\!4dk\!-\!k^2\!-\!k}
    =\begin{bmatrix}
       n\!-\!1 \\\frac{n\!-\!1}{2}
     \end{bmatrix}_{q^2}\begin{bmatrix}
    2n\!-\!1 \\n\!-\!1
  \end{bmatrix}_q\frac{(q^{2d+1};q^2)_nq^{-\frac{(n\!-\!1)(12d\!+\!3n\!+\!1)}{4}}}
    {(-q;q)_{n-1}^2(q;q^2)_n}\notag    \\
      & \times(1-q^n)\sum_{k=\!-\!\frac{n\!-\!1}{2}}^{\frac{n\!-\!1}{2}}
    \frac{(q^{1-n};q^2)_k}{(q^{1+n};q^2)_k}
    \frac{(-1)^kq^{(2n+2d)k-k}}{1-q^{n+2d-2k}}, \notag
  \end{align}
  since
  $$ \frac{(q^{2d+1};q^2)_n}{(q;q^2)_n}=\frac{(q;q^2)_{n+d}}{(q;q^2)_d(q;q^2)_n}
    =\frac{(q^{2n+1};q^2)_d}{(q;q^2)_d},$$
  using (\ref{equ-pan}, \ref{equ-guo}) we know that
  \begin{align}\label{proof-congru-theorem-1}
    \sum_{k=0}^{n-1}\frac{(q^{4d\!+\!2};q^4)_k}{(q^2;q^2)_k} & q^{\!-\!4dk\!-\!k^2\!-\!k}
    \equiv (-1)^{\frac{n-1}{2}}q^{\!-\!\frac{n^2\!-\!1}{2}\!+\!3d\!-\!3dn}
    \frac{(q^{2n+1};q^2)_d}{(q;q^2)_d}\notag      \\
      & \times(1-q^n)\sum_{k=\!-\!\frac{n\!-\!1}{2}}^{\frac{n\!-\!1}{2}}
    \frac{(q^{1-n};q^2)_k}{(q^{1+n};q^2)_k}
    \frac{(-1)^kq^{(2n+2d)k-k}}{1-q^{n+2d-2k}}\mod\Phi_n(q)^2\notag      \\
      & \equiv(-1)^{\frac{n-1}{2}}q^{\!-\!\frac{n^2\!-\!1}{2}\!+\!3d}
    \{(1\!-\!q^n)\Psi_{n, d}(q)\!+\!\notag        \\
      & \ \ \frac{(q^{2n\!+\!1};q^2)_d}{(q;q^2)_d}
    \frac{(q^{1\!-\!n};q^2)_d}{(q^{1\!+\!n};q^2)_d}(\!-\!1)^dq^{\!-\!dn\!+\!2d^2\!-\!d}\}\mod\Phi_n(q)^2.
  \end{align}
  If $ 0\leq d<\frac{n-1}{2}$,  then
  \begin{align}
    \frac{(q^{2n+1};q^2)_d}{(q;q^2)_d}\frac{(q^{1-n};q^2)_d}{(q^{1+n};q^2)_d} &
    =\prod_{j=0}^{d-1}\frac{(1-q^{2n+1+2j})(1-q^{1-n+2j})}{(1-q^{1+2j})(1-q^{n+1+2j})}\notag    \\
& =\prod_{j=0}^{d-1}\frac{-q^{1-n+2j}(1+q^{3n})+1+q^{n+2+4j}}{(1-q^{1+2j})(1-q^{n+1+2j})}\notag \\
& =\prod_{j=0}^{d-1}\frac{\!-\!q^{1\!-\!n\!+\!2j}(1\!+\!q^{n})(1\!-\!q^n\!+\!q^{2n})\!+\!1\!+\!q^{n\!+\!2\!+\!4j}}{(1-q^{1+2j})(1-q^{n+1+2j})}, \notag
  \end{align}
  from the proof of Theorem 1.2 in  \cite{Liujicai-petrov-2020}
  \begin{align}\label{equ-liujicai}
    1-q^{2n}=(1+q^n)(1-q^n)\equiv2(1-q^n)\mod \Phi_n(q)^2,
  \end{align}
  we know that
  $$ 1-q^n+q^{2n}\equiv q^n\mod \Phi_n(q)^2, $$
  hence
  \begin{align}
    \frac{(q^{2n+1};q^2)_d}{(q;q^2)_d}\frac{(q^{1-n};q^2)_d}{(q^{1+n};q^2)_d}
     & = \prod_{j=0}^{d-1}\frac{\!-\!q^{1\!-\!n\!+\!2j}(1\!+\!q^{n})(1\!-\!q^n\!+\!q^{2n})\!+\!1\!+\!q^{n\!+\!2\!+\!4j}}{(1-q^{1+2j})(1-q^{n+1+2j})}\notag \\
     & \equiv \prod_{j=0}^{d-1}\frac{\!-\!q^{1\!+\!2j}(1\!+\!q^{n})\!+\!1\!+\!q^{n\!+\!2\!+\!4j}}{(1-q^{1+2j})(1-q^{n+1+2j})}\mod \Phi_n(q)^2\notag        \\
     & \equiv \prod_{j=0}^{d-1}\frac{(1-q^{1+2j})(1-q^{n+1+2j})}{(1-q^{1+2j})(1-q^{n+1+2j})}\mod \Phi_n(q)^2\notag        \\
     & \equiv 1\mod \Phi_n(q)^2, \notag
  \end{align}
  Similarly,  if  $-\frac{n-1}{2}<d<0$,  then
  \begin{align}
    \frac{(q^{2n\!+\!1};q^2)_{d}}{(q;q^2)_{d}}\frac{(q^{1\!-\!n};q^2)_{d}}{(q^{1\!+\!n};q^2)_{d}}
    =\frac{(q;q^2)_{-d}(q^{1\!-\!n};q^2)_{-d}}{(q^{1\!-\!2n};q^2)_{-d}(q^{1\!+\!n};q^2)_{-d}}\equiv 1\mod\Phi_n(q)^2, \notag
  \end{align}
  combining (\ref{congru-lemma-eq-0}),  the formula for the braces in
  the final equation of (\ref{proof-congru-theorem-1}) becomes
  \begin{align}\label{proof-congru-theorem-3}
     & (1\!-\!q^n)\Psi_{n, d}(q)+\frac{(q^{2n+1};q^2)_d}{(q;q^2)_d}
    \frac{(q^{1-n};q^2)_d}{(q^{1+n};q^2)_d}(-1)^dq^{\!-\!dn\!+\!2d^2\!-\!d}\notag         \\
     & \equiv (1\!-\!q^n)\Psi_{n, d}(q)\!+\!(\!-\!1)^dq^{-dn\!+\!2d^2\!-\!d}\mod \Phi_n(q)^2\notag   \\
     & \equiv (1\!-\!q^n)(-1)^{d\!+\!1}dq^{2d^2\!-\!d}\!+\!(\!-\!1)^dq^{-dn\!+\!2d^2\!-\!d}\!+\!
    \mathbf{Sgn}(d)(1\!-\!q^n)(\!-\!1)^{\frac{n\!-\!1}{2}}q^{2d^2\!+\!\frac{n\!-\!1}{2}}\times\notag \\
     & \ \ \ \sum_{j=1}^{|d|}\frac{q^{j\!-\!2dj}(1\!+\!q^{(4j\!-\!2)d})}{1\!-\!q^{4j\!-\!2}}
    \{1\!+\!(\!-\!1)^{j\!+\!d}\!+\!q^{2j\!-\!1}[(\!-\!1)^{j\!+\!d}\!-\!1]\}\mod\Phi_n(q)^2\notag     \\
     & \equiv(\!-\!1)^dq^{2d^2\!-\!d}(q^{\!-\!dn}\!-\!d\!+\!dq^{n})
    \!+\! \mathbf{Sgn}(d)(1\!-\!q^n)(\!-\!1)^{\frac{n\!-\!1}{2}}q^{2d^2+\frac{n-1}{2}}\times\notag   \\
     & \ \ \ \sum_{j=1}^d\frac{q^{j\!-\!2dj}(1\!+\!q^{(4j\!-\!2)d})}{1\!-\!q^{4j\!-\!2}}
    \{1\!+\!(\!-\!1)^{j\!+\!d}\!+\!q^{2j\!-\!1}[(\!-\!1)^{j\!+\!d}\!-\!1]\}\mod\Phi_n(q)^2,
  \end{align}
  by (\ref{equ-liujicai}),  we know that
  \begin{align}
    (1\!-\!q^{-dn})\!=\!(1\!-\!q^{\!-\!n})(1\!+\!q^{\!-\!n}\!+\!\dots\!+\!q^{(\!-\!d\!+\!1)n})
    \equiv d(1\!-\!q^{\!-\!n})\equiv\!-\!d(1\!-\!q^n)\mod\Phi_n(q)^2, \notag
  \end{align}
  so that
  \begin{align}
    q^{-dn}-d(1-q^n)\equiv1\mod\Phi_n(q)^2,
  \end{align}
  thus,  the first term in the final form of  (\ref{proof-congru-theorem-3}) is
  \begin{align}\label{proof-congru-theorem-4}
     & (\!-\!1)^dq^{2d^2\!-\!d}(q^{\!-\!dn}\!-\!d\!+\!dq^{n})\equiv(\!-\!1)^dq^{2d^2\!-\!d}\mod\Phi_n(q)^2,
  \end{align}
  combining (\ref{proof-congru-theorem-1}), (\ref{proof-congru-theorem-3})
  and (\ref{proof-congru-theorem-4}) we have
  \begin{align}\label{proof-congru-theorem-5}
    \sum_{k=0}^{n-1} & \frac{(q^{4d\!+\!2};q^4)_k}{(q^2;q^2)_k}q^{\!-\!4dk\!-\!k^2\!-\!k}
    \!\equiv\! (\!-\!1)^{\frac{n\!-\!1}{2}\!+\!d}q^{\!-\!\frac{n^2\!-\!1}{2}\!+\!2d^2\!+\!2d}\!+\!
    \mathbf{Sgn}(d)(1\!-\!q^n)q^{\!-\!\frac{n(n\!-\!1)}{2}\!+\!3d\!+\!2d^2}\times\notag     \\
          & \sum_{j=1}^{|d|}\frac{q^{j\!-\!2dj}(1\!+\!q^{(4j\!-\!2)d})}{1\!-\!q^{4j\!-\!2}}
    \{1\!+\!(\!-\!1)^{j\!+\!d}\!+\!q^{2j\!-\!1}[(\!-\!1)^{j\!+\!d}\!-\!1]\}\mod\Phi_n(q)^2\notag       \\
          & \equiv (\!-\!1)^{\frac{n\!-\!1}{2}\!+\!d}q^{\!-\!\frac{n^2\!-\!1}{2}\!+\!2d^2\!+\!2d}\!+\!
    \mathbf{Sgn}(d)(1\!-\!q^n)q^{2d^2\!+\!3d}\times\notag  \\
          & \ \ \ \sum_{j=1}^{|d|}\frac{q^{j\!-\!2dj}(1\!+\!q^{(4j\!-\!2)d})}{1\!-\!q^{4j\!-\!2}}
    \{1\!+\!(\!-\!1)^{j\!+\!d}\!+\!q^{2j\!-\!1}[(\!-\!1)^{j\!+\!d}\!-\!1]\}\mod\Phi_n(q)^2.
  \end{align}
  Therefore, Theorem \ref{app-th2} holds.
\end{proof}

\section{Acknowledgment}
The author was partially supported by the National Natural Science Foundation of China (Grant 12371328). The author would like to express gratitude to Professor Zhiguo Liu for his invaluable technical and material support during the writing of this manuscript. Special thanks are extended to Jing Gu for carefully reviewing the initial draft and providing valuable insights. The author also acknowledges Feng Liu and Deliang Wei for their assistance during the writing process.

\end{document}